\newtheorem{theorem}{Theorem}
\newtheorem{corollary}{Corollary}
\newtheorem{lemma}{Lemma}[section]
\LetLtxMacro{\existsold}{\exists}
\renewcommand{\exists}{\existsold \hspace{.2em} }
\LetLtxMacro{\forallold}{\forall}
\renewcommand{\forall}{\: \forallold \: }
\LetLtxMacro{\intnolim}{\int}
\renewcommand{\int}{\intnolim\limits }
\renewcommand{\tilde}{\widetilde}
\newcommand{\norm}[1]{\left\lVert #1 \right\rVert }
\renewcommand{\Re}{\text{Re} }
\renewcommand{\Im}{\text{Im} }
\renewcommand{\epsilon}{\varepsilon }
\DeclareMathOperator{\tr}{tr}
\DeclareMathOperator{\col}{Col}
\DeclareMathOperator{\nul}{Null}
\newcommand{\R}{\mathbb{R}}
\DeclareMathOperator*{\argmin}{arg\,min}
\newcommand{\tnorm}[1]{\VERT #1 \VERT}
\DeclareMathOperator{\spanop}{span}
\title{Orbital minimization method with $\ell^1$ regularization}
\author{Jianfeng Lu}
\address{Department of Mathematics, Department of
  Physics, and Department of Chemistry, Duke University, Box 90320, Durham NC 27708, USA}
\email{jianfeng@math.duke.edu}
\author{Kyle Thicke}
\address{Department of Mathematics, Duke University, Box 90320, Durham NC 27708, USA}
\email{kyle.thicke@duke.edu}
\date{\today} \thanks{This work is partially supported by the National
  Science Foundation under grants DMS-1454939 and ACI-1450280. We
  thank Fabiano Corsetti, Haizhao Yang and Wotao Yin for helpful
  discussions.}
\begin{document}

\begin{abstract}
  We consider a modification of the OMM energy functional which
  contains an $\ell^1$ penalty term in order to find a sparse
  representation of the low-lying eigenspace of self-adjoint operators.
  We analyze the local minima of the modified functional as well as
  the convergence of the modified functional to the original
  functional.  Algorithms combining soft thresholding with gradient
  descent are proposed for minimizing this new functional.  Numerical
  tests validate our approach.  As an added bonus, we also prove the
  unanticipated and remarkable property that every local minimum the
  OMM functional without the $\ell^1$ term is also a global minimum.
\end{abstract}

\maketitle

\section{Introduction}

This paper considers solving for a sparse representation of the low-lying
eigenspace of self-adjoint operators. Given a Hermitian matrix $H$,
the low-lying eigenspace is defined as the linear combination of the
first $m$ eigenvectors of $H$. Our aim is to find a sparse
representation by solving a variational problem for the eigenspace
which at the same time favors sparsity.

Such sparse representations have applications in many scenarios,
including constructing a localized numerical basis for PDE problems and
sparse PCA in statistics.  The main motivation of the current work
comes from electronic structure calculations, for which the sparse
representations, given by (approximate) linear combinations of the
low-lying eigenfunctions, are known as the Wannier functions
\cite{Kohn:1959Wannier, Wannier:1937}.

Motivated by the recent works
\cite{OzolinsLaiCaflischOsher:13,lai2014density,lai2015localized} that
exploit the $\ell^1$ penalty to enhance sparsity (see also
\cite{E:2010PNAS, Marzari:1997} for alternative localization
strategies), in this work we consider minimizing the variational
model
\begin{equation}
  E_\mu(X) = \tr[(2I-X^*X)X^*HX] + \mu \tnorm{X}_1,
\end{equation}
where $X \in \mathbb{C}^{N\times m}$ and $\tnorm{\cdot}_1$ is the entry-wise $\ell^1$ norm. Here, the energy functional without the $\ell^1$ penalty 
\begin{equation}
  E_0(X) = \tr[(2I-X^*X)X^*HX],
\end{equation} 
is used in the orbital minimization method (OMM), developed in the
context of linear scaling algorithms for electronic structure
\cite{mauri1993orbital, mauri1994electronic, Ordejon:93, Ordejon:95}
to alleviate the orthogonality constraint $X^{\ast} X = I$.  Hence, we
will refer to $E_0$ as the OMM functional and refer to $E_{\mu}$ as the OMM
functional with an $\ell^1$ penalty.

Since the orthogonality constraint is lifted, $E_{\mu}$ can be minimized with
unconstrained minimization algorithms.  This allows the algorithm to be significantly simpler than that for the trace minimization with an
$\ell^1$ penalty proposed in \cite{OzolinsLaiCaflischOsher:13} based on operator splitting \cite{Lai:2014splitting}:
\begin{equation}
  E^{\text{trace}}_{\mu}(X) = \tr [ X^{\ast} H X ] + \mu \tnorm{X}, \qquad \text{s.t.}\quad X^{\ast} X = I.  
\end{equation}
Note that it is possible to lift the orthogonality constraint by
convexification, as in \cite{lai2014density,lai2015localized}, which
leads to localized density matrix minimization. However, the density matrix $P
\in \mathbb{C}^{N\times N}$, which is the projection operator onto the
low-lying eigenspace, contains many more degrees of freedom than $X$
when $N \gg m$. Hence, it might be favorable to consider the non-convex
functional $E_{\mu}$ with fewer degrees of freedom.

For the OMM functional without the $\ell^1$ penalty, it is well known
\cite{mauri1993orbital, pfrommer1999unconstrained} that the global
minimizers of the OMM functional $E_0$ correspond to a basis for the
low-lying eigenspace of $H$, if $H$ is negative definite. Somewhat
surprisingly, it turns out that we can further show that $E_0$ has no local
minima in the sense that every local minimum is a global minimum, and
hence a representation of the low-lying eigenspace, as shown in
Theorem~\ref{thm:localglobalminima}. In particular, when minimizing
$E_0$ using e.g., the conjugate gradient method, we will not be trapped at
local minima. 

With the $\ell^1$ term, the minimizer of $E_{\mu}$ no longer
corresponds to the exact low-lying eigenspace; however, we show that
it gives a good approximation as $\mu \to 0$. We will further analyze
the approximate orthogonality of the minimizer and approximation to
the density matrix.

The OMM functional has been used to develop linear scaling electronic
structure algorithms (so that computational cost scales linearly with
the number of electrons).  See e.g., the review paper
\cite{Goedecker:99}. The conventional strategy is to simply truncate
the domain of the problem by minimizing $E_0$ over a subspace of
matrices in $\mathbb{C}^{N \times m}$ which have a particular
predefined support \cite{mauri1993orbital, mauri1994electronic} with
only $O(m)$ degrees of freedom. However, this truncation is known to
create many local minima in the problem, which trap the
minimization. While efforts to alleviate the local minima issue have
been undertaken \cite{kim1995total,gao2009orbital, AOMM}, it still remains
a challenge for practical applications. It turns out that
minimizing $E_{\mu}$ naturally leads to a soft thresholding
optimization algorithm, thanks to the $\ell^1$
penalty term.  In our numerical tests, we find that local minima are
not encountered in practice unless the penalty parameter $\mu$ is
large (details in Section \ref{subsec:localMinTest}).  Our method also
has the additional advantage that the support of the solution does not
have to be chosen ahead of time.

We also note that even in the context of a cubic scaling implementation,
the OMM algorithm still has an advantage over direct eigensolvers in terms
of scalability in parallel implementations, as recently demonstrated
in \cite{Corsetti:14}. The inclusion of the $\ell^1$ penalty controls
the sparsity of the solution, which allows for sparse matrix arithmetic
and will hopefully enable a natural transition between the cubic
scaling and reduced scaling algorithms. 

The remainder of the paper is structured as follows.  In Section 2, we introduce the different energy functionals we will be working with.  We perform analysis to examine the local minima of the functionals and to examine the convergence of the modified functional to the original functional.  In Section 3, we present algorithms for numerically minimizing our new functional.  Numerical tests are performed in Section 4 to validate the theoretical results from Section 2 as well as to examine the performance of the algorithms proposed in Section 3.

\section{Analysis}

\subsection{Original OMM energy functional}
\label{subsec:orgOMM}

When using Kohn--Sham density functional theory, one encounters the problem of calculating the low-lying eigenspace of a matrix $H$.  This can be rephrased as the following minimization problem.  Minimize the following functional over the set of matrices $Y\in \mathbb{C}^{N\times m}$ satisfying the orthonormality constraint $Y^*Y = I$.
\begin{equation}
E_{\perp}(Y) = \tr[Y^*HY]. \label{functionalOrth}
\end{equation}
The orthonormality constraint can be difficult to deal with, but we can relax the constraint by replacing $Y$ with $XS^{-1/2}$ where $S = X^*X$.  If $X$ has full rank, then $XS^{-1/2}$ satisfies the orthonormality constraint.  By making this substitution, we obtain the functional
\begin{equation}
E_{S^{-1}}(X) = \tr[S^{-1}X^*HX]. \label{functionalSinv}
\end{equation}
Minimizing \eqref{functionalSinv} is clearly equivalent to minimizing
\eqref{functionalOrth}, but we have relaxed the restrictions on the
set we are minimizing over.  Instead of minimizing only over the set
of matrices with orthonormal columns, we can now minimize over the set
of all full rank matrices in $\mathbb{C}^{N\times m}$.

The idea behind the OMM functional \cite{mauri1993orbital,
  mauri1994electronic} is to avoid computing the inverse of $S$, by
replacing $S^{-1}$ by an approximation obtained by a Neumann expansion
around $I$, the identity matrix.  In particular, we can replace
$S^{-1}$ with a matrix of the following form
\begin{equation}
Q_\mathcal{N} = \sum_{k=0}^\mathcal{N} (I - S)^k.
\end{equation}
Then instead of minimizing $E_{S^{-1}}$, we can consider minimizing the functional
\begin{equation}
E_{Q_\mathcal{N}}(X) = \tr[Q_\mathcal{N} X^*HX],
\end{equation}
where $\mathcal{N}$ is an odd integer (if $\mathcal{N}$ is even, the
functional is not bounded from below).  It has been shown that this
alternate functional has very nice properties \cite{mauri1993orbital,
  mauri1994electronic} which we summarize in the following theorem.
\begin{theorem}
\label{thm:minimizingSetOfE0}
Let $H \in \mathbb{C}^{N \times N}$ be Hermitian with eigenvalues $\lambda_1 \le \lambda_2 \le ... \le \lambda_N$.  Then the global minima of $E_{S^{-1}}$ is the set of matrices $X \in \mathbb{C}^{N \times m}$ whose column space is a span of eigenvectors corresponding to the $m$ most negative eigenvalues of $X$.  

Suppose further that $H$ is negative definite (we can always shift $H \to H - \eta I$ if necessary).  Let $\mathcal{N}$ be an odd positive integer.  Then, the functional $E_{Q_\mathcal{N}}$ has a global minimum, and in fact, $\min E_{Q_\mathcal{N}} = \min E_{S^{-1}}$.  Additionally, $X$ is a minimizer of $E_{Q_\mathcal{N}}$ if and only if $X$ is a minimizer of $E_{S^{-1}}$ and the columns of $X$ are orthonormal.
\end{theorem}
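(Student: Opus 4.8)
The plan is to diagonalize $H$ and reduce everything to statements about the eigenvalues. Write $H = U\Lambda U^*$ with $\Lambda = \operatorname{diag}(\lambda_1,\dots,\lambda_N)$, and note that replacing $X$ by $U^*X$ changes neither $S = X^*X$ nor the traces defining the functionals (since $U$ is unitary), so we may assume $H = \Lambda$ is diagonal. For the first claim, about $E_{S^{-1}}$, I would first observe that on full-rank $X$ the functional depends only on the column space $V = \operatorname{Col}(X)$: if $Y = XS^{-1/2}$ then $Y^*Y = I$ and $E_{S^{-1}}(X) = \operatorname{tr}[Y^*HY]$, so $E_{S^{-1}}(X) = \operatorname{tr}[P_V H P_V]$ where $P_V$ is the orthogonal projector onto $V$. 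This reduces the problem to the classical Ky Fan variational principle: over all $m$-dimensional subspaces $V$, $\operatorname{tr}[P_V H P_V] = \sum_{i} \langle v_i, H v_i\rangle$ (for any orthonormal basis $v_i$ of $V$) is minimized exactly when $V$ is spanned by eigenvectors for the $m$ smallest (most negative) eigenvalues, with minimum value $\lambda_1 + \cdots + \lambda_m$. I would either cite Ky Fan or give the one-paragraph argument (expand $v_i$ in the eigenbasis, write the trace as $\sum_j \lambda_j w_j$ where $w_j = \sum_i |\langle u_j, v_i\rangle|^2 \in [0,1]$ and $\sum_j w_j = m$, then minimize this linear function over the polytope).

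For the second claim, the key algebraic identity is that for $\mathcal{N}$ odd,
\begin{equation}
Q_\mathcal{N} = \sum_{k=0}^{\mathcal{N}} (I-S)^k = (I - (I-S)^{\mathcal{N}+1}) S^{-1} = S^{-1} - (I-S)^{\mathcal{N}+1} S^{-1},
\end{equation}
valid whenever $S$ is invertible (and extendable by continuity otherwise, but all competitors of interest are full rank). Since $S$ is Hermitian positive definite, everything here is simultaneously diagonalizable: if $S$ has eigenvalues $\sigma_1,\dots,\sigma_m > 0$ then $Q_\mathcal{N}$ has eigenvalues $q(\sigma_i)$ where $q(t) = \sum_{k=0}^{\mathcal{N}}(1-t)^k = \frac{1-(1-t)^{\mathcal{N}+1}}{t}$. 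The crucial elementary fact, which I would prove by a short calculus argument, is that for $\mathcal{N}$ odd and $t > 0$ one has $q(t) \le 1/t$ with equality iff $t = 1$; indeed $1/t - q(t) = (1-t)^{\mathcal{N}+1}/t \ge 0$ since $\mathcal{N}+1$ is even. I would then use this together with negative definiteness of $H$. Writing $M := X^*HX$, which is Hermitian negative definite (as $X$ has full rank and $H \prec 0$), I want to compare $\operatorname{tr}[Q_\mathcal{N} M]$ with $\operatorname{tr}[S^{-1} M]$. The difference is $-\operatorname{tr}[(I-S)^{\mathcal{N}+1}S^{-1} M]$. Here $(I-S)^{\mathcal{N}+1}S^{-1} = S^{-1}(I-S)^{\mathcal{N}+1}$ is Hermitian positive semidefinite (even power times positive matrix, all commuting), and $-M$ is positive definite, so the trace of their product is $\ge 0$; hence $E_{Q_\mathcal{N}}(X) = \operatorname{tr}[Q_\mathcal{N}M] \ge \operatorname{tr}[S^{-1}M] = E_{S^{-1}}(X) \ge \min E_{S^{-1}}$. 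This shows $\inf E_{Q_\mathcal{N}} \ge \min E_{S^{-1}}$, and equality is achieved at any $X$ with $X^*X = I$ that minimizes $E_{S^{-1}}$ (then $Q_\mathcal{N} = I = S^{-1}$), so the minimum is attained and equals $\min E_{S^{-1}}$.

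For the characterization of minimizers, suppose $X$ minimizes $E_{Q_\mathcal{N}}$. Then both inequalities above are equalities. Equality in the second, $E_{S^{-1}}(X) = \min E_{S^{-1}}$, says $X$ minimizes $E_{S^{-1}}$ (so its column space is correct, by part one). Equality in the first, $\operatorname{tr}[(I-S)^{\mathcal{N}+1}S^{-1}(-M)] = 0$, is the trace of a product of a PSD matrix $A := (I-S)^{\mathcal{N}+1}S^{-1}$ and a positive definite matrix $-M$; since $-M \succ 0$, this forces $A = 0$ (write $\operatorname{tr}[A(-M)] = \operatorname{tr}[(-M)^{1/2}A(-M)^{1/2}]$, a sum of nonnegative terms that vanishes only if $(-M)^{1/2}A(-M)^{1/2} = 0$, hence $A = 0$). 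Then $(I-S)^{\mathcal{N}+1} = 0$, and since $I - S$ is Hermitian this gives $I - S = 0$, i.e. $X^*X = I$. Conversely, if $X$ minimizes $E_{S^{-1}}$ and is orthonormal, then $Q_\mathcal{N} = I$, so $E_{Q_\mathcal{N}}(X) = \operatorname{tr}[X^*HX] = E_{S^{-1}}(X) = \min E_{S^{-1}} = \min E_{Q_\mathcal{N}}$. The main obstacle is bookkeeping rather than conceptual: being careful that all the matrices in sight genuinely commute and are simultaneously diagonalizable so that the scalar inequality $q(t) \le 1/t$ transfers to the operator level, and handling (or excluding, via the unboundedness remark for the full-rank reduction) rank-deficient $X$; the sign condition "$\mathcal{N}$ odd" enters exactly once, in making $\mathcal{N}+1$ even so that $(I-S)^{\mathcal{N}+1} \succeq 0$.
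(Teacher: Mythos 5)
Your argument is essentially correct, and it is worth noting that the paper does not actually prove Theorem~\ref{thm:minimizingSetOfE0} at all --- it is stated as a summary of known results with a citation to the original OMM papers --- so your self-contained proof is a genuinely different (and welcome) route rather than a restatement. The two pillars you use are sound: the reduction of $E_{S^{-1}}$ to the Ky Fan principle via $E_{S^{-1}}(X)=\tr[P_V H]$ with $V=\col(X)$, and the geometric-series identity $Q_\mathcal{N}=\bigl(I-(I-S)^{\mathcal{N}+1}\bigr)S^{-1}$ together with the observation that $(I-S)^{\mathcal{N}+1}S^{-1}\succeq 0$ (commuting Hermitian factors, even exponent) and $-X^*HX\succ 0$, which gives $E_{Q_\mathcal{N}}(X)\ge E_{S^{-1}}(X)$ on full-rank $X$ and, through the equality analysis $\tr[A(-M)]=0\Rightarrow A=0\Rightarrow S=I$, the full characterization of minimizers. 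This is exactly where the hypothesis ``$\mathcal{N}$ odd'' enters, as you say.

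The one loose end is the rank-deficient case, which you flag but do not resolve; the appeal to an ``unboundedness remark'' is not the right tool, and the issue cannot simply be waved away because the ``only if'' direction of the characterization requires that no rank-deficient $X$ attain $\min E_{Q_\mathcal{N}}$. Fortunately the fix is short and you should include it: (i) full-rank matrices are dense and $E_{Q_\mathcal{N}}$ is a polynomial in the entries of $X$, so the inequality $E_{Q_\mathcal{N}}\ge\min E_{S^{-1}}$ extends to all of $\mathbb{C}^{N\times m}$ by continuity; (ii) if $\operatorname{rank}(X)=r<m$, use the $U(m)$-invariance to make $S$ diagonal, note that the zero columns contribute nothing to $\tr[Q_\mathcal{N}X^*HX]$, and apply your full-rank bound to the reduced $N\times r$ matrix to get $E_{Q_\mathcal{N}}(X)\ge\lambda_1+\cdots+\lambda_r>\lambda_1+\cdots+\lambda_m$, the strict inequality coming from $\lambda_i<0$. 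With that inserted, the argument is complete; also note in passing that the theorem statement's ``eigenvalues of $X$'' is a typo for ``eigenvalues of $H$,'' and that the equality case of your Ky Fan step should be phrased modulo the degeneracy caveat ($\lambda_m=\lambda_{m+1}$) that the paper itself makes after the theorem.
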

Note that the ``eigenvectors corresponding to the $m$ most negative
eigenvalues of $H$'' may not be unique since it could be that
$\lambda_m = \lambda_{m+1}$.  When we use this phrase, we are
referring to any possible choice of eigenvectors, when degeneracy
occurs.

The major conclusion of the theorem is that we can find exact
minimizers of $E_{S^{-1}}$ by minimizing the simpler functional
$E_{Q_\mathcal{N}}$ which does not require the computation of
$S^{-1}$.  This both reduces the computational cost and allows us to
minimize over all matrices rather than just those of full rank.  In
this paper, we will choose $\mathcal{N} = 1$.  In particular, we
consider the problem of minimizing the functional we call $E_0$,
defined by
\begin{equation}
E_0(X) = \tr\bigl[(2I - X^*X)X^*HX\bigr],
\end{equation}
where $X \in \mathbb{C}^{N \times m}$ and $H \in \mathbb{C}^{N \times N}$ is a Hermitian negative definite matrix with eigenvalues $\lambda_1 \le \lambda_2 \le ... \le \lambda_N < 0$.

Our first main result is the following theorem. The result is quite
unexpected as $E_0$ is clearly non-convex, while the theorem states
that local minima of $E_0$ are also global minima.

\begin{theorem}
\label{thm:localglobalminima}
Any local minimum of $E_0$ is also a global minimum.
\end{theorem}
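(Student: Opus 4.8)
The plan is to analyze the critical point equation of $E_0$ and show that any critical point which is not a global minimum admits a descent direction, hence cannot be a local minimum. First I would compute the Euler--Lagrange equation: writing the directional derivative $\langle \nabla E_0(X), \delta X\rangle$ and setting it to zero gives an equation of the form $HX(2I - X^*X) + X(\text{something with } X^*HX) = 0$, or more precisely $2HX - HXX^*X - XX^*HX = 0$ after symmetrizing. The key structural observation is that this equation, together with Hermitian $H$, forces $X$ to be built out of eigenvectors of $H$: I would show that the column space of $X$ (or a related invariant subspace) is spanned by eigenvectors of $H$, and that the "diagonal blocks" are either zero or orthonormal-like. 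This is the standard analysis that underlies Theorem~\ref{thm:minimizingSetOfE0}, so I would lean on it heavily; the new content is classifying \emph{all} critical points, not just the minimizers.

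Next I would stratify the critical points. At a critical point, after a unitary change of basis diagonalizing $H$, one can argue $X$ essentially decomposes according to eigenspaces, so that the relevant data is: which $m$-dimensional coordinate subspace (i.e., which collection of eigenvalues $\lambda_{i_1}, \dots, \lambda_{i_m}$) the columns of $X$ align with, and on that subspace $X$ looks like an orthonormal frame (times signs/phases). The value of $E_0$ at such a critical point should come out to $\sum_j \lambda_{i_j}$ (or the sum over whichever eigenvalues are "selected," with the non-selected directions contributing zero). Then the global minimum corresponds to selecting the $m$ most negative eigenvalues. A critical point that is not global therefore has selected some $\lambda_{i_k}$ with $\lambda_{i_k} > \lambda_\ell$ for some unselected $\ell$ (equivalently, $\lambda_{i_k}$ is not among the $m$ smallest).

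The heart of the argument — and the step I expect to be the main obstacle — is constructing an explicit descent direction at such a non-optimal critical point, i.e. showing it is a saddle rather than a local min. The natural candidate is a "rotation" that slowly swaps the $\lambda_{i_k}$-direction for the $\lambda_\ell$-direction: take $X(t) = X + t V$ where $V$ replaces the offending column direction with the better eigenvector, and compute $E_0(X(t))$ to second order. Because the first-order term vanishes (critical point), everything hinges on the sign of the second derivative (the Hessian quadratic form in the direction $V$), and I would need to show it is strictly negative, which should reduce to something like a multiple of $(\lambda_\ell - \lambda_{i_k}) < 0$. Care is needed because of the non-convex quartic structure and because directions that merely rescale or re-phase columns give zero second derivative (these are the flat directions corresponding to the gauge freedom $X \mapsto XU$); one must pick $V$ transverse to that gauge orbit. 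One also has to handle the degenerate possibility that a column of $X$ is "zero" on its selected eigenspace, i.e. $X$ is rank-deficient — but there the descent direction is even easier, since one can simply grow a component along an unused negative eigenvector and the energy strictly decreases at first or second order. Assembling these cases — full-rank non-optimal critical point (rotation direction) and rank-deficient critical point (growth direction) — and verifying strict negativity of the relevant second-order term in each, is the technical crux; the rest is bookkeeping built on Theorem~\ref{thm:minimizingSetOfE0}.
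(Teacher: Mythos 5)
Your proposal follows essentially the same route as the paper: the critical-point equation $2HX - HXX^*X - XX^*HX = 0$ and the conclusion that $\col(X)$ is spanned by eigenvectors of $H$ with $X^*X$ having eigenvalues in $\{0,1\}$ are exactly the paper's Lemma~\ref{lem:charOfCPs} (using the $U(m)$ gauge invariance of Lemma~\ref{lem:invariance}), and your descent-direction step is the paper's second-order expansion around a critical point (Lemma~\ref{lem:genPfrommerAnalysis}), applied in the same two cases — growing a component along an unused low eigenvector when a column is zero, and adding a component of a better eigenvector $y_t$ to a non-optimal column $s$, where the sign indeed reduces to $\lambda_t - \lambda_s < 0$. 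The plan is correct and matches the paper's argument in both structure and the key computations.
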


Before proving the theorem, we note a very practical consequence of it.  Many numerical optimization algorithms can guarantee convergence to a critical point of the objective function.  If we use such an algorithm, then after the algorithm converges, we can perturb the solution.  Theorem \ref{thm:localglobalminima} tells us that if starting the algorithm from this perturbed point leads to convergence to the same value of $E_0$, then that point is likely a global minimum.

The proof of Theorem \ref{thm:localglobalminima} will follow from the
following lemmas.  The first two lemmas we state are elementary.  Then
the next two lemmas will provide the crucial tools needed to prove the
theorem.  Lemma~\ref{lem:charOfCPs} provides a characterization of the
critical points of $E_0$, and Lemma~\ref{lem:genPfrommerAnalysis}
allows us to analyze the behavior of $E_0$ near the critical points.
By using the analysis in Lemma~\ref{lem:genPfrommerAnalysis}, we can
examine all the critical points which have been characterized in
Lemma~\ref{lem:charOfCPs} to show that all the critical points which
are local minima are actually global minima.

\begin{lemma}
\label{lem:invariantSubspaces}
If $A \in \mathbb{C}^{n \times n}$ is Hermitian and $V$ is a $k$-dimensional invariant subspace of $A$, then there exist $k$
eigenvectors $\{\xi_i\}_{i=1}^k$ of $A$ such that $V =
\spanop\bigl(\{\xi_i\}_{i=1}^k\bigr)$.
\end{lemma}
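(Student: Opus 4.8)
The plan is to reduce the statement to the spectral theorem by passing to the restriction of $A$ to $V$. First I would note that, because $V$ is invariant under $A$, the assignment $v \mapsto Av$ defines a linear operator $A|_V : V \to V$ on the $k$-dimensional space $V$. Equip $V$ with the inner product it inherits from the standard Hermitian inner product on $\mathbb{C}^n$. Then $A|_V$ is Hermitian with respect to this inner product: for all $u,v \in V$ we have $\ip{A|_V u, v} = \ip{Au,v} = \ip{u,Av} = \ip{u, A|_V v}$, using only that $A$ itself is Hermitian on $\mathbb{C}^n$.

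Next I would invoke the finite-dimensional spectral theorem for the Hermitian operator $A|_V$ on $V$: there exists an orthonormal basis $\{\xi_i\}_{i=1}^k$ of $V$ consisting of eigenvectors of $A|_V$, say $A|_V \xi_i = \mu_i \xi_i$ with $\mu_i \in \R$. The key point is that each such $\xi_i$ is also an eigenvector of the full matrix $A$: since $\xi_i \in V$, we have $A\xi_i = A|_V \xi_i = \mu_i \xi_i$. As $\{\xi_i\}_{i=1}^k$ is in particular a basis of $V$, we obtain $V = \spanop\bigl(\{\xi_i\}_{i=1}^k\bigr)$, which is exactly the assertion.

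There is no real obstacle here, since every step is a direct application of a standard fact; the only subtlety worth flagging in the write-up is that the spectral theorem is applied to the restricted operator $A|_V$ rather than to $A$, and that the invariance of $V$ is precisely what guarantees that $A|_V$ is a well-defined operator on $V$ (and hence that the argument is legitimate).
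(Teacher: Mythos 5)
Your proof is correct and follows essentially the same route as the paper's: restrict $A$ to the invariant subspace $V$, observe that the restriction is Hermitian, apply the spectral theorem to it, and note that its eigenvectors are eigenvectors of $A$ spanning $V$. The only (cosmetic) difference is that you argue intrinsically with the operator $A|_V$ and the inherited inner product, whereas the paper represents $A|_V$ by the matrix $B^*AB$ in an orthonormal basis of $V$ and lifts eigenvectors back via $B$.
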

\begin{proof}
Let $\{b_i\}_{i=1}^k$ be an orthonormal basis for $V$.  Define $B = [b_1, ..., b_k]$.  Then $A|_V$ with respect to the basis $\{b_i\}_{i=1}^k$ is $B^*AB$, which is Hermitian.  Therefore, $A|_V$ is diagonalizable with eigenvectors $\{\eta_i\}_{i=1}^k \subset \mathbb{C}^k$.  This means $B^*AB\eta_i = \lambda_i \eta_i$.  Multiplying on the left by $B$ and noting that $BB^* = I$ on $V$, we see that $\{B\eta_i\}_{i=1}^k$ are eigenvectors of $A$.  It is easy to see that $\{B\eta_i\}_{i=1}^k$ are independent and therefore form a basis for $V$.
\end{proof}

\begin{lemma}
\label{lem:invariance}
$E_0$ is invariant under right multiplication by elements of
$U(m)$.  That is, $E_0(XG) = E_0(X)$ for all $X \in
\mathbb{C}^{N \times m}$ and $G \in U(m)$.
\end{lemma}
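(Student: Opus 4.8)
The plan is a direct algebraic computation that uses only the unitarity of $G$ and the cyclicity of the trace. First I would substitute $XG$ for $X$ in the definition of $E_0$, obtaining
\begin{equation*}
E_0(XG) = \tr\bigl[(2I - G^*X^*XG)\,G^*X^*HXG\bigr].
\end{equation*}
Next, since $G \in U(m)$ we have $G^*G = GG^* = I$, and in particular $2I = G^*(2I)G$, so the first factor rewrites as $2I - G^*X^*XG = G^*(2I - X^*X)G$. Inserting this and collapsing the adjacent $GG^* = I$ in the middle gives
\begin{equation*}
E_0(XG) = \tr\bigl[G^*(2I - X^*X)X^*HXG\bigr].
\end{equation*}
Finally, setting $A = (2I - X^*X)X^*HX$ and using $\tr[G^*AG] = \tr[AGG^*] = \tr[A]$ yields $E_0(XG) = \tr[A] = E_0(X)$.

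There is no real obstacle here; the statement is essentially bookkeeping. The only points worth making explicit are that unitarity supplies both $G^*G = I$ (to collapse the interior $GG^*$) and lets the constant term $2I$ pass through the conjugation by $G$, so that in effect the entire matrix $(2I - X^*X)X^*HX$ is conjugated by $G$ inside the trace, after which cyclicity removes the conjugation. This invariance is precisely what will later allow us to assume, without loss of generality, a convenient normalization of $X$ (for instance that $X^*X$ is diagonal) when characterizing and analyzing the critical points of $E_0$ in Lemma~\ref{lem:charOfCPs}.
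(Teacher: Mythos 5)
Your computation is correct and is exactly the routine argument the paper summarizes with ``use basic properties of the trace'': conjugate $(2I - X^*X)X^*HX$ by $G$ using unitarity, then remove the conjugation by cyclicity. No differences or gaps to report.
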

\begin{proof}
Use basic properties of the trace.
\end{proof}

Before continuing, we note that since $X^*X$ is Hermitian, there exists $G \in U(m)$ such that $X^*X = G\Lambda G^*$ where $\Lambda$ is diagonal.  We will often combine this fact with Lemma \ref{lem:invariance} to assume without loss of generality that $X^*X$ is diagonal.  We will now prove the above assertions concerning $E_0$ by following a natural progression through the following lemmas.  The next lemma is interesting in its own right as it provides a characterization of the critical points of $E_0$.  Its proof proceeds in two steps.  First, we show that the eigenvalues of $X^*X$ are in $\{0,1\}$.  Then, we can use this fact to show that the column space of $X$, $\col(X)$, is the span of some set of eigenvectors of $H$.

\begin{lemma}
\label{lem:charOfCPs}
If $\nabla E_0(X) = 0$, then all eigenvalues of $X^*X$ are in $\{0,1\}$.  Additionally, the $\col(X)$ is the span of some set of eigenvectors of $H$.
\end{lemma}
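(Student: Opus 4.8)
The plan is to derive the critical point (stationarity) equation for $E_0$, then use the unitary invariance of Lemma~\ref{lem:invariance} to reduce to the case in which $X^*X$ is diagonal, and finally read off both conclusions column by column.

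First I would compute the gradient of $E_0(X) = 2\tr[X^*HX] - \tr[X^*X X^*HX]$ and record that $\nabla E_0(X) = 0$ is equivalent to the matrix identity
\begin{equation*}
  2HX = HX(X^*X) + XX^*HX .
\end{equation*}
Since $X^*X$ is Hermitian, there is $G \in U(m)$ making $G^*(X^*X)G$ diagonal; as $E_0(XG) = E_0(X)$ by Lemma~\ref{lem:invariance} (so $XG$ is again a critical point), $(XG)^*(XG)$ has the same eigenvalues as $X^*X$, and $\col(XG) = \col(X)$, I may assume from the outset that $S := X^*X = \operatorname{diag}(s_1,\dots,s_m)$ with each $s_i \ge 0$.

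Next I would multiply the critical point equation on the left by $X^*$ to obtain $2A = AS + SA$, where $A := X^*HX$ is Hermitian. Reading the $(i,i)$ entry gives $2A_{ii} = 2 s_i A_{ii}$, i.e.\ $(1 - s_i) A_{ii} = 0$. Writing $x_i$ for the $i$-th column of $X$, we have $A_{ii} = x_i^* H x_i$, which is strictly negative whenever $x_i \neq 0$ because $H$ is negative definite; and since $S$ is diagonal, $\|x_i\|^2 = s_i$, so $x_i \neq 0$ precisely when $s_i \neq 0$. Hence $s_i \neq 0$ forces $s_i = 1$, and all eigenvalues of $X^*X$ lie in $\{0,1\}$. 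For the second claim, note that the columns $x_i$ with $s_i = 1$ are orthonormal while the rest vanish, so $XX^* = \sum_{i : s_i = 1} x_i x_i^*$ is exactly the orthogonal projection $P$ onto $\col(X)$. The $i$-th column of the critical point equation says $(2 - s_i) H x_i = XX^* H x_i$; for every index with $s_i = 1$ this yields $H x_i = P H x_i \in \col(X)$ (and for $s_i = 0$ it is vacuous), so $\col(X)$ is $H$-invariant. Lemma~\ref{lem:invariantSubspaces} then exhibits $\col(X)$ as the span of a set of eigenvectors of $H$, and since $\col(XG) = \col(X)$ this transfers back to the original $X$.

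I expect the only subtle point to be the observation in the last step that $XX^*$ is a genuine orthogonal projection: this fails for a general matrix but holds after the reduction to diagonal $X^*X$, since then the nonzero columns of $X$ are orthonormal. The gradient computation and the entrywise algebra are routine.
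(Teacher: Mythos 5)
Your proposal is correct and follows essentially the same route as the paper's proof: the same stationarity equation, the reduction to diagonal $X^*X$ via Lemma~\ref{lem:invariance}, negative definiteness of $H$ forcing $\norm{x_i} \in \{0,1\}$, and $H$-invariance of $\col(X)$ combined with Lemma~\ref{lem:invariantSubspaces}. The only cosmetic difference is that you package the first step as the matrix identity $2A = AS + SA$ with $A = X^*HX$, while the paper reads off the same information from $x_i^* (XX^*H + HXX^* - 2H) x_i = 0$; your explicit remark that $XX^*$ becomes an orthogonal projection after the reduction is the same fact the paper uses as $\nul(XX^*-I)=\col(X)$.
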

\begin{proof}
  Note that if $G \in U(m)$, then $\nabla E_0(XG) = \nabla E_0(X)G$.
  So without loss of generality, assume that $X^*X$ is diagonal. By
  assumption we have,
\begin{align}
\nabla E_0(X) &= -2 XX^*HX + 4HX - 2HXX^*X \notag\\
  &= -2(XX^*H + HXX^* - 2H)X = 0. \label{gradientIsZero}
\end{align}
Denote $X = [x_1, x_2, ..., x_m]$, and let $T = XX^*H + HXX^* - 2H$. Note that $XX^* = \sum x_i x_i^*$.  By noting that the columns of $X$ are orthogonal (since $X^*X$ is diagonal), we have
\begin{align}
XX^*x_i &= x_i x_i^* x_i = \norm{x_i}^2 x_i, \\
x_i^*XX^* &= x_i^* x_i x_i^* = \norm{x_i}^2 x_i^*.
\end{align}
Now we use the fact that \eqref{gradientIsZero} implies $x_i^* T x_i = 0$.
\begin{align}
0 &= x_i^*(XX^*H + HXX^* - 2H)x_i \notag\\
  &= 2(\norm{x_i}^2 - 1) x_i^* H x_i.
\end{align}
$H$ is negative definite, so if $x_i \ne \vec{0}$, then $x_i^* H x_i < 0$.  This implies that either $x_i = \vec{0}$ or $\norm{x_i} = 1$.  Since the diagonal entries of $X^*X$ are $\norm{x_i}^2$, this implies that all eigenvalues of $X^*X$ are contained in the set $\{0,1\}$.

Now we may assume without loss of generality that $X^*X$ is diagonal with diagonal entries consisting only of 0s and 1s.  Equation \eqref{gradientIsZero} implies that for each column $x_i$ of $X$,
\begin{align}
\vec{0} &= (XX^*H + HXX^* - 2H)x_i \notag\\
  &= XX^*Hx_i + Hx_i - 2Hx_i \notag\\
  &= (XX^* - I)Hx_i.
\end{align}
But $\nul(XX^*-I) = \col(X)$, so $Hx_i \in \col(X)$ for all $i$.  This implies that $\col(X)$ is an invariant subspace of $H$.  So, by Lemma \ref{lem:invariantSubspaces}, $\col(X)$ is the span of some set of eigenvectors of $H$.
\end{proof}

Now we consider the perturbation of the energy of $E_0$ around a
critical point. The analysis is similar to, but more general than that
in \cite{pfrommer1999unconstrained}. The lemma will be used to prove
that all local minima of $E_0$ are global minima.

\begin{lemma}
\label{lem:genPfrommerAnalysis}
Let $\{y_i\}_{i=1}^N$ be a complete orthonormal set of eigenvectors of $H$ corresponding respectively to the (not necessarily ordered) eigenvalues $\{\lambda_i\}_{i=1}^N$ of $H$.  Let $Z = [z_1, z_2, ..., z_m]$ where $z_i$ is either $y_i$ or $\vec{0}$.  Let $X = [x_1, x_2, ..., x_m]$ where $x_i = z_i + \sum_{j=1}^N c_j^{(i)} y_j$.  Then, 
\begin{align}
E_0(X) - E_0(Z) &= \sum_{i=1}^m \sum_{k = m+1}^N |c_k^{(i)}|^2\Big(2\lambda_k - (\lambda_k + \lambda_i) \chi_i\Big) + 2 \sum_{i=1}^m \Big[(1 - \chi_i)|c_i^{(i)}|^2 - 2 (\text{Re}(c_i^{(i)}))^2 \chi_i\Big] \lambda_i \notag\\
  &\hspace{-6em}- \sum_{i=1}^m \sum_{\substack{j=1\\i \ne j}}^m \biggl[ c_i^{(j)} c_j^{(i)} \lambda_i \chi_i \chi_j + c_i^{(j)*} c_j^{(i)*} \lambda_j \chi_i \chi_j + |c_i^{(j)}|^2 \lambda_i \chi_i + |c_j^{(i)}|^2 \lambda_j \chi_j \notag \\
    & - |c_j^{(i)}|^2 \Big( 2\lambda_j - (\lambda_j + \lambda_i)\chi_i \Big)\biggr] \notag\\
  &\hspace{-6em}+ \mathcal{O}(\norm{X-Z}_F^3),
\end{align}
where $\chi_i = 0$ if $z_i = \vec{0}$ and $\chi_i = 1$ if $z_i = y_i$.
\end{lemma}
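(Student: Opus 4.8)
The plan is to compute $E_0(X) - E_0(Z)$ by direct Taylor expansion in the perturbation, keeping all terms up to second order and collecting the cubic and higher-order remainder into the $\mathcal{O}(\norm{X-Z}_F^3)$ term. The natural coordinates are the expansion coefficients $c_j^{(i)}$ of the perturbation of each column in the eigenbasis $\{y_j\}$; writing $X = Z + C$ where $C = [\sum_j c_j^{(1)} y_j, \ldots, \sum_j c_j^{(m)} y_j]$, every quantity appearing in $E_0(X) = \tr[(2I - X^*X)X^*HX] = 2\tr[X^*HX] - \tr[X^*X X^*HX]$ becomes a polynomial in the $c_j^{(i)}$ and the eigenvalues $\lambda_j$, because $Hy_j = \lambda_j y_j$ and $\ip{y_j, y_k} = \delta_{jk}$ turn all matrix products into finite sums.

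First I would expand the quadratic piece $2\tr[X^*HX] = 2\sum_i \ip{x_i, H x_i}$: since $x_i = z_i + \sum_j c_j^{(i)} y_j$ and $z_i \in \{y_i, \vec 0\}$, this is $2\sum_i \chi_i \lambda_i + 4\sum_i \chi_i \Re(c_i^{(i)}) \lambda_i + 2\sum_i \sum_j |c_j^{(i)}|^2 \lambda_j$, using $\chi_i^2 = \chi_i$ and $\chi_i z_i = z_i$. Subtracting the corresponding term for $Z$ (which is $2\sum_i \chi_i \lambda_i$) removes the constant. Second, I would expand the quartic piece $\tr[X^*X X^*HX]$; writing $M = X^*X$ (an $m\times m$ matrix with entries $M_{ij} = \ip{x_i, x_j}$) and $K = X^*HX$ (entries $K_{ij} = \ip{x_i, H x_j}$), we have $\tr[MK] = \sum_{i,j} M_{ij} K_{ji}$. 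Each of $M_{ij}$ and $K_{ij}$ is, in the $c$-coordinates, a sum of a zeroth-order term ($\chi_i \delta_{ij}$ resp. $\chi_i \lambda_i \delta_{ij}$), a first-order term (linear in $c_i^{(j)}, c_j^{(i)}$ weighted by $\chi$'s and $\lambda$'s), and a second-order term ($\sum_k c_k^{(i)*} c_k^{(j)}$ resp. $\sum_k \lambda_k c_k^{(i)*} c_k^{(j)}$). Multiplying these out and keeping products whose total order is $\le 2$ gives: $0\cdot 0$ (constant, cancels against $Z$), $0\cdot 1$ and $1 \cdot 0$ cross terms (these combine to the linear-in-$\Re(c_i^{(i)})$ contributions), $0 \cdot 2$ and $2 \cdot 0$ terms (these contribute the $|c_k^{(i)}|^2$ sums with various $\lambda$-weights, including the $k > m$ and $k \le m$ ranges separately because the roles of $\chi$ differ), and $1 \cdot 1$ terms (these produce the $c_i^{(j)} c_j^{(i)}$ and $c_i^{(j)*} c_j^{(i)*}$ cross-column products). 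All order-$\ge 3$ products go into the remainder. The linear terms must cancel because $Z$ is a critical point of $E_0$ (by Lemma~\ref{lem:charOfCPs}, $Z$ has orthonormal-or-zero columns spanning eigenvectors of $H$, so $\nabla E_0(Z) = 0$), which is a useful consistency check but can also be verified directly from the algebra.

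The main obstacle is purely organizational bookkeeping: there are many index ranges to separate carefully — $i = j$ versus $i \ne j$ among the first $m$ indices, and $k \le m$ versus $k > m$ in the coefficient sums — and the zeroth-order value $z_i = y_i$ only when $i \le m$ and $\chi_i = 1$, so the $\chi_i$ factors must be tracked through every product. The cleanest route is to first record the three-term decomposition of $M_{ij}$ and $K_{ij}$ explicitly, then tabulate the nine product types for $\tr[MK]$, discard those of order $\ge 3$, and finally regroup: the terms with both indices $\le m$ that are "diagonal" ($i = j$) give the second bracketed sum $2\sum_i [(1-\chi_i)|c_i^{(i)}|^2 - 2(\Re c_i^{(i)})^2 \chi_i]\lambda_i$ after combining with the quadratic piece; the off-diagonal $i \ne j$ terms with both indices $\le m$ give the double sum with the $c_i^{(j)} c_j^{(i)}$ structure; and the terms involving a coefficient $c_k$ with $k > m$ give the first sum $\sum_i \sum_{k > m} |c_k^{(i)}|^2(2\lambda_k - (\lambda_k + \lambda_i)\chi_i)$. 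Matching signs against $E_0(X) = 2\tr[X^*HX] - \tr[X^*XX^*HX]$ — so the quadratic piece enters with $+$ and the quartic with $-$ — then yields exactly the stated formula. No step requires anything beyond the spectral decomposition of $H$ and careful algebra; I would present the $M$ and $K$ decompositions as a short computation and then state the regrouping, relegating the full term-by-term expansion to a routine verification.
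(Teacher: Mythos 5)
Your proposal is correct and is essentially the paper's own proof: the paper simply performs (and omits as ``tedious but straightforward'') the same direct expansion of $2\tr[X^*HX]-\tr[X^*XX^*HX]$ in the eigenbasis coordinates $c_j^{(i)}$, keeping terms through second order and absorbing the rest into $\mathcal{O}(\norm{X-Z}_F^3)$, and your regrouping of the $M=X^*X$, $K=X^*HX$ decompositions does reproduce the stated formula. One small caution: Lemma~\ref{lem:charOfCPs} gives necessary conditions on critical points rather than asserting that such $Z$ are critical, so the cancellation of the linear terms should be justified by the direct algebra (which works), as you note.
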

\begin{proof}
The calculation is tedious but straightforward.
\end{proof}

Using the above lemmas, we may now prove Theorem \ref{thm:localglobalminima} as promised.

\begin{proof}[Proof of Theorem \ref{thm:localglobalminima}]
We first note that the global minima of $E_0$ are the $X$ whose columns span the same space as the eigenvectors corresponding to the $m$ most negative eigenvalues (counting geometric multiplicity).  Due to the fact that eigenvalues can have more than one independent eigenvector, this $m$-dimensional ``minimal subspace'' may not be unique.

If $X_0$ is a local minimum, then Lemma \ref{lem:charOfCPs} implies that the column space of $X_0$ is spanned by some set $\{y_i\}_{i=1}^r$ of orthonormal eigenvectors of $H$.  Without loss of generality, we can assume that $X_0 = [y_1, y_2, ..., y_r, \vec{0}, ..., \vec{0}]$.  We can extend the set of $y_i$'s to a complete orthonormal set of eigenvectors of $H$, $\{y_i\}_{i=1}^N$.  Then we can apply Lemma \ref{lem:genPfrommerAnalysis} with $Z = X_0$.   

Our last step is to show that if the columns of $X_0$ are not
eigenvectors corresponding to the $m$ most negative eigenvalues of
$H$, then $X_0$ is not a local minimum.  First, if $X_0$ contains a
column which is $\vec{0}$, then it is easy to see that there must
exist $C_1>0$ such that for $0<|c_m^{(m)}| < C_1$, $E_0(X) <
E_0(X_0)$, where $X$ is defined as in
Lemma~\ref{lem:genPfrommerAnalysis} with all $c_i^{(j)} = 0$ except
for $c_m^{(m)}$.  So we may assume that $X_0$ does not contain any
$\vec{0}$ columns.  Next, if column $s$ of $X_0$ is an eigenvector
which does not correspond to one of the $m$ most negative eigenvalues,
then an eigenvector $y_t$ ($t > m$) that does correspond to one of the
$m$ most negative eigenvectors is not a column of $X_0$.  Define $X$
as in Lemma \ref{lem:genPfrommerAnalysis} with all $c_i^{(j)} = 0$
except for $c_t^{(s)}$.  Then it is easy to see that there exists $C_2
>0$ such that for $0<|c_t^{(s)}| < C_2$, $E_0(X) < E_0(X_0)$.  This
proves that $X_0$ is a local minimum only if its columns are
eigenvectors corresponding to the $m$ most negative eigenvalues of
$H$.  Therefore, $X_0$ is a global minimum.
\end{proof}

We remark that we can say a little more to classify the critical
points of $E_0$.  It is apparent from \eqref{E0ThroughOrigin} that the
only local maximum of $E_0$ is at the origin.  Therefore, all
non-origin critical points are either saddle points or global minima.

\subsection{OMM energy functional with $\ell^1$ penalty}

The major goal for the rest of the paper is to introduce and analyze a modification of the OMM energy functional which favors sparsity in its solution.  In order to favor sparsity, we add an $\ell^1$ penalty term to the OMM energy functional.  In particular, the new functional we consider is
\begin{equation}
\label{defEmu}
E_\mu(X) = E_0(X) + \mu \tnorm{X}_1,
\end{equation}
where $\tnorm{X}_1 = \sum_i \sum_j |X_{ij}|$ is the entrywise $\ell^1$
norm (viewing $X$ as a vector).  By minimizing \eqref{defEmu}, we hope
to find solutions which are both sparse and ``near'' the global minima
of $E_0$.  In particular, as $\mu \to 0^+$, we would hope that the
minima of $E_\mu$ approach the minima of $E_0$.  In
Lemma~\ref{lem:convwrtmu}, we will show that this is true.
Additionally, in Theorem~\ref{thm:convRateMu}, we quantify the rate at
which the minima of $E_\mu$ approach the minima of $E_0$ as $\mu \to
0^+$.  Before stating these results, we define some notation that we
will use throughout.
\begin{align}
S_\mu &= \argmin E_\mu(X),\\
d(A,B) &= \sup_{a \in A} \inf_{b \in B} \norm{a-b}_F.
\end{align}

Now we prove that the set of minimizers of $E_\mu$ converges to the set of minimizers of $E_0$ as $\mu \to 0^+$.  Even more, $S_\mu$ converges to the elements of $S_0$ which have minimal $\ell^1$ norm.

\begin{lemma}
\label{lem:convwrtmu}
\emph{(Convergence as $\mu \to 0^+$)} Let $M = \argmin\limits_{X \in S_0} \tnorm{X}_1$.  Then, $d(S_\mu, M) \to 0$ as $\mu \to 0^+$.
\end{lemma}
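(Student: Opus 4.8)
The plan is to prove this by a compactness argument. First I would assemble the facts about $S_0$ and $M$. By Theorem~\ref{thm:minimizingSetOfE0} (taking $\mathcal{N}=1$), $S_0$ is nonempty and consists of matrices with $X^*X=I$, so $S_0$ is bounded with $\norm{X}_F^2=m$ for every $X\in S_0$; it is closed because $E_0$ is continuous and attains its minimum. Here one uses that $H$ is negative definite, which makes $E_0$ — and hence $E_\mu = E_0 + \mu\tnorm{\cdot}_1 \ge E_0$ — coercive, so that the minimum of $E_0$ is attained and $S_\mu\neq\emptyset$ for every $\mu>0$. Thus $S_0$ is compact, $\tnorm{\cdot}_1$ attains its minimum on $S_0$, and $M=\argmin_{X\in S_0}\tnorm{X}_1$ is a nonempty compact subset of $S_0$. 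Fix once and for all a reference point $Y_0\in M$; then $E_0(Y_0)=\min E_0$ and $\tnorm{Y_0}_1=\min_{X\in S_0}\tnorm{X}_1$.

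The engine of the proof is the comparison $E_\mu(X_\mu)\le E_\mu(Y_0)$, valid for any $\mu>0$ and any $X_\mu\in S_\mu$, which reads
\begin{equation*}
E_0(X_\mu) + \mu\tnorm{X_\mu}_1 \;\le\; \min E_0 + \mu\tnorm{Y_0}_1 .
\end{equation*}
Since $E_0(X_\mu)\ge \min E_0$, subtracting $\min E_0$ and dividing by $\mu>0$ yields the $\mu$-uniform bound $\tnorm{X_\mu}_1\le \tnorm{Y_0}_1$; and since $\mu\tnorm{X_\mu}_1\ge 0$, the same inequality also gives $E_0(X_\mu)\le \min E_0 + \mu\tnorm{Y_0}_1$. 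In particular every minimizer of every $E_\mu$ with $\mu>0$ lies in the single bounded set $\{X:\tnorm{X}_1\le\tnorm{Y_0}_1\}$.

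Now suppose, for contradiction, that $d(S_\mu,M)\not\to 0$ as $\mu\to 0^+$. Then there are $\epsilon>0$, a sequence $\mu_n\to 0^+$, and $X_n\in S_{\mu_n}$ with $\inf_{Y\in M}\norm{X_n-Y}_F\ge\epsilon$ for all $n$. By the uniform bound above $\{X_n\}$ is bounded, so after passing to a subsequence $X_{n_k}\to X_\ast$. Letting $k\to\infty$ in $E_0(X_{n_k})\le \min E_0 + \mu_{n_k}\tnorm{Y_0}_1$ and using continuity of $E_0$ gives $E_0(X_\ast)\le \min E_0$, hence $X_\ast\in S_0$; letting $k\to\infty$ in $\tnorm{X_{n_k}}_1\le\tnorm{Y_0}_1$ and using continuity of $\tnorm{\cdot}_1$ gives $\tnorm{X_\ast}_1\le \min_{X\in S_0}\tnorm{X}_1$, which together with $X_\ast\in S_0$ forces $X_\ast\in M$. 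But then $\inf_{Y\in M}\norm{X_{n_k}-Y}_F\le\norm{X_{n_k}-X_\ast}_F\to 0$, contradicting the choice of the $X_n$. Therefore $d(S_\mu,M)\to 0$.

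The only step that is not completely routine is the well-posedness and uniform-boundedness input in the first two paragraphs: one must know that each $E_\mu$ actually attains a minimum and that these minimizers do not escape to infinity as $\mu\to 0^+$. Both reduce to the coercivity of $E_0$, which is where negative definiteness of $H$ is used. Once this is granted, the comparison inequality $E_\mu(X_\mu)\le E_\mu(Y_0)$ does essentially all the work, and the remainder is a standard subsequence-and-continuity argument.
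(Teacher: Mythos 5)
Your proof is correct and takes essentially the same route as the paper's: the comparison inequality $E_\mu(X_\mu)\le E_\mu(Y_0)$ with $Y_0\in S_0$ gives the uniform $\ell^1$ bound and $E_0(X_\mu)\to\min E_0$, and a compactness/subsequence contradiction argument finishes the proof. You are in fact slightly more explicit than the paper on two points it leaves implicit — that limit points have minimal $\ell^1$ norm and hence lie in $M$ (rather than just in $S_0$), and that coercivity of $E_0$ guarantees $S_\mu\neq\emptyset$ and compactness of $S_0$ — but the underlying argument is the same.
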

\begin{proof}
First, we note that by the definition of $S_\mu$ ($\mu > 0$), if $X \in S_\mu$ and $Y \in S_0$, then
\begin{equation}
E_0(X) + \mu \tnorm{X}_1 = E_\mu(X) \le E_\mu(Y) = \min E_0 + \mu \tnorm{Y}_1
\end{equation}
This implies that $\tnorm{X}_1 \le \tnorm{Y}_1$.  So, for all $\mu > 0$,
\begin{equation}
\sup_{X \in S_\mu} \tnorm{X}_1 \le \inf_{Y \in S_0} \tnorm{Y}_1. \label{eqBoundednessOfXk}
\end{equation}
Now, let $\{\mu_k\}$ be a sequence that converges down to 0.  Let $X_k \in S_{\mu_k}$ and $Y \in S_0$.  By definition of $S_{\mu_k}$,
\begin{equation}
E_0(X_k) + \mu_k \tnorm{X_k}_1 = E_{\mu_k}(X_k) \le E_{\mu_k}(Y) = \min E_0 + \mu_k \tnorm{Y}_1
\end{equation}
which implies
\begin{equation}
E_0(X_k) - \min E_0 \le \mu_k (\tnorm{Y}_1 - \tnorm{X_k}_1) \le \mu_k \tnorm{Y}_1
\end{equation}
Since this holds for all $k \in \mathbb{N}$ and $\mu_k \to 0$, we
conclude that $E_0(X_k) \to \min E_0$.  Now suppose by way of
contradiction that there exists $\epsilon > 0$ and a subsequence
$\{X_{k_l}\}$ such that $d(X_{k_l},S_0) > \epsilon$ for all $l$.
Eq.~\eqref{eqBoundednessOfXk} implies that there exists a convergent
subsequence $\{X_{{k_l}_m}\}$.  And by continuity of $E_0$, we get our
contradiction: $X_{{k_l}_m} \to X^* \in S_0$.  Since $\mu_k$ and $X_k$
were arbitrary, we have shown that $d(S_\mu,S_0) \to 0$ as $\mu \to
0^+$.  Now, by \eqref{eqBoundednessOfXk}, it is clear that $d(S_\mu,M)
\to 0$ as $\mu \to 0^+$.
\end{proof}

Now that we have proven convergence, we further state and prove the
convergence rate.

\begin{theorem}
\label{thm:convRateMu}
\emph{(Convergence rate as $\mu \to 0^+$)}
Let $\alpha \in (0,1)$.  Suppose $\lambda_m < \lambda_{m+1}$.  Then there exists $\mu_0(\alpha) > 0$ such that for $0< \mu < \mu_0(\alpha)$,
\begin{equation}
d(S_\mu, S_0) \le \frac{\mu \sqrt{Nm}}{\alpha (\lambda_{m+1}-\lambda_m)}.
\end{equation}
\end{theorem}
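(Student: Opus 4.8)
The plan is to derive a quantitative form of Lemma~\ref{lem:convwrtmu} from two ingredients: a first-order optimality bound controlling $\norm{\nabla E_0(X)}_F$ at a minimizer $X$ of $E_\mu$, and a local error bound showing that near $S_0$ the norm of $\nabla E_0$ controls the distance to $S_0$. Precisely, I would establish that there is $\delta(\alpha)>0$ with
\begin{equation*}
d(X,S_0)<\delta(\alpha)\ \Longrightarrow\ \norm{\nabla E_0(X)}_F\ge\alpha(\lambda_{m+1}-\lambda_m)\,d(X,S_0),
\end{equation*}
and then combine this with the optimality bound and Lemma~\ref{lem:convwrtmu}.

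The easy ingredients come first. If $X\in S_\mu$, then $0\in\nabla E_0(X)+\mu\,\partial\tnorm{X}_1$; every entry of a subgradient of $\tnorm{\cdot}_1$ at $X$, viewed as a complex number, has modulus at most $1$, so such a subgradient has Frobenius norm at most $\sqrt{Nm}$, giving $\norm{\nabla E_0(X)}_F\le\mu\sqrt{Nm}$. Next, since $\lambda_m<\lambda_{m+1}$, Theorem~\ref{thm:minimizingSetOfE0} identifies the minimal subspace as the unique space $V_m=\spanop(y_1,\dots,y_m)$ spanned by eigenvectors of the $m$ most negative eigenvalues, so $S_0=\{WG:G\in U(m)\}$ with $W=[y_1,\dots,y_m]$ --- a compact manifold on which $E_0$ is constant, with $T_WS_0=\{WA:A^*=-A\}$. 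Given $X$ with $d(X,S_0)$ small, choose by compactness a nearest point $Y\in S_0$ and write $Y=WG$; replacing $X$ by $XG^*$ changes neither $\norm{\nabla E_0(X)}_F$ (since $\nabla E_0(XG^*)=\nabla E_0(X)G^*$, as used in the proof of Lemma~\ref{lem:charOfCPs}) nor $d(X,S_0)$, so I may assume the nearest point is $W$. Set $E=X-W$, so $\rho:=\norm{E}_F=d(X,S_0)$; the first-order condition for the nearest-point problem gives $E\perp T_WS_0$.

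The core step is the local error bound at $W$. Since $E_0$ is a quartic polynomial and $\nabla E_0(W)=0$, we have $\nabla E_0(X)=\mathcal{H}[E]+R$ with $\mathcal{H}=\nabla^2E_0(W)$ (the Hessian in the real coordinates of $\mathbb{C}^{N\times m}$) and $\norm{R}_F\le C\rho^2$, $C$ depending only on $H$ (say for $\rho\le1$). Specializing Lemma~\ref{lem:genPfrommerAnalysis} to $Z=W$ (all $\chi_i=1$) and simplifying, the quadratic part of $E_0(W+E)-E_0(W)$ is a nonnegative sum of squared magnitudes of the eigenbasis coefficients of $E$, with coefficients $\lambda_k-\lambda_i$ (for $i\le m<k$), $-4\lambda_i$, and $-(\lambda_i+\lambda_j)$ (for $i<j\le m$); as $H$ is negative definite and $\lambda_m<\lambda_{m+1}$, each of these is at least $\lambda_{m+1}-\lambda_m>0$. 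This quadratic form vanishes exactly on $T_WS_0$, so $\ker\mathcal{H}=T_WS_0$ and $\mathcal{H}$ is positive definite on $(T_WS_0)^\perp$ with smallest eigenvalue at least $2(\lambda_{m+1}-\lambda_m)$. Since $E\perp T_WS_0$, it follows that $\norm{\mathcal{H}[E]}_F\ge2(\lambda_{m+1}-\lambda_m)\rho$, hence $\norm{\nabla E_0(X)}_F\ge\bigl(2(\lambda_{m+1}-\lambda_m)-C\rho\bigr)\rho$. Choosing $\delta(\alpha)=(2-\alpha)(\lambda_{m+1}-\lambda_m)/C$ yields the claimed error bound whenever $\rho=d(X,S_0)<\delta(\alpha)$; the slack $\alpha<1$ is precisely what absorbs the remainder $R$.

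Finally, by Lemma~\ref{lem:convwrtmu} there is $\mu_0(\alpha)>0$ with $d(S_\mu,S_0)<\delta(\alpha)$ whenever $0<\mu<\mu_0(\alpha)$; for such $\mu$ and any $X\in S_\mu$ the two bounds give $\alpha(\lambda_{m+1}-\lambda_m)\,d(X,S_0)\le\norm{\nabla E_0(X)}_F\le\mu\sqrt{Nm}$, and taking the supremum over $X\in S_\mu$ gives the theorem. I expect the main obstacle to be the core step: carrying out the tedious specialization of Lemma~\ref{lem:genPfrommerAnalysis} at $Z=W$, recognizing the resulting quadratic form as a sum of squares whose coefficients are all at least $\lambda_{m+1}-\lambda_m$, and verifying that its kernel is exactly $T_WS_0$. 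This is where the hypothesis $\lambda_m<\lambda_{m+1}$ is indispensable --- both to pin down $S_0$ and to furnish the spectral gap appearing in the Hessian.
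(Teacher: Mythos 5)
Your proposal is correct, but it takes a genuinely different route from the paper. You argue at the level of gradients: stationarity of $E_\mu$ gives $\norm{\nabla E_0(X_\mu)}_F \le \mu\sqrt{Nm}$ (a bound the paper itself uses, verbatim, in the proof of Theorem~\ref{thm:localMinGenByCPs}), and you pair it with a local error bound $\norm{\nabla E_0(X)}_F \ge \alpha(\lambda_{m+1}-\lambda_m)\,d(X,S_0)$ obtained by identifying the kernel of the Hessian at a nearest minimizer $W$ with $T_WS_0$, using the nearest-point orthogonality $E\perp T_WS_0$ and the spectral gap $2(\lambda_{m+1}-\lambda_m)$ on the normal space, with Lemma~\ref{lem:convwrtmu} supplying the localization. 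The paper instead argues at the level of energies: after the same unitary reduction and nearest-point first-order conditions, it uses Lemma~\ref{lem:genPfrommerAnalysis} to prove the quadratic growth $E_0(X)-\min E_0 \ge \alpha(\lambda_{m+1}-\lambda_m)\,d(X,S_0)^2$ near $S_0$, and combines this with the comparison $E_\mu(X_\mu)\le E_\mu(X_0)$ for the nearest $X_0\in S_0$, which via $\tnorm{X_0-X_\mu}_1 \le \sqrt{Nm}\,\norm{X_0-X_\mu}_F$ yields a bound linear in $d(X_\mu,S_0)$; dividing the two gives the rate. Both routes rest on the same two lemmas and yield the same constant, but the paper's version additionally produces the sharper sparsity-dependent bound \eqref{SmutoS0rate} and directly feeds Corollary~\ref{cor:minEmutominE0}, whereas yours isolates a reusable \L{}ojasiewicz-type error bound and avoids the $\ell^1$--$\ell^0$ manipulation, at the price of the Taylor expansion of $\nabla E_0$ and the kernel identification. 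On that core step, the details do work out, with one point to watch: combining the $(i,j)$ and $(j,i)$ terms of Lemma~\ref{lem:genPfrommerAnalysis} (all $\chi_i=1$) gives the square $\bigl(|\lambda_i|+|\lambda_j|\bigr)\bigl|c_i^{(j)}+\overline{c_j^{(i)}}\bigr|^2$, i.e.\ the \emph{conjugated} combination (the Hermitian part of $W^*E$), and it is precisely this conjugation that makes the kernel equal to the anti-Hermitian directions $T_WS_0$ rather than the complex-antisymmetric ones; your assertion $\ker\mathcal{H}=T_WS_0$ is the correct statement. Minor housekeeping you should add: cap $\delta(\alpha)$ by the radius on which your cubic-remainder constant $C$ is valid, and note that coercivity of $E_\mu$ (from \eqref{EmuThroughOrigin}) guarantees $S_\mu\neq\emptyset$ so the supremum over $X_\mu\in S_\mu$ is meaningful.
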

\begin{proof}
  Without loss of generality, assume that the eigenvectors of $H$
  associated to $\lambda_1, ..., \lambda_N$ are the standard basis vectors, $\vec{e}_1, ...,
  \vec{e}_N$, respectively (a change of basis could always be used to
  bring the problem into this form).  Then, Theorem
  \ref{thm:minimizingSetOfE0} implies that the global minimizers of
  $E_0$ are given by the set
\begin{equation}
\argmin E_0 = \biggl\{ \begin{bmatrix}
G \\
0 \end{bmatrix} \in \mathbb{C}^{N \times m} : G \in U(m) \biggr\}.
\end{equation}
Let $X \in \mathbb{C}^{N \times m}$.  Since the set $\argmin E_0$ is
closed, the set $\argmin \{\norm{X - G}_F : G \in \argmin E_0 \}$ is
nonempty.  By Lemma \ref{lem:invariance}, we can rotate $X$ without
changing $E_{0}(X)$ such that
\begin{equation*}
  \tilde{I} = \begin{bmatrix}
    I_m \\
    0 \end{bmatrix} \in \argmin \bigl\{\norm{X - G}_F: G \in \argmin
  E_0 \bigr\}.
\end{equation*}
Next, we wish to show that for such $X$, we have $X_{ij} = X_{ji}$ for
$i < j \le m$.  For a given $i, j, \theta$, define $G^{(i,j)}(\theta)
\in \mathbb{C}^{N \times m}$ such that
\begin{align}
G^{(i,j)}_{ii}(\theta) &= G^{(i,j)}_{jj}(\theta) = \cos(\theta), \\
G^{(i,j)}_{ij}(\theta) &= -G^{(i,j)}_{ji}(\theta) = \sin(\theta), \\
G^{(i,j)}_{kl}(\theta) &= \tilde{I}_{kl}, \qquad \text{ for } (k,l) \notin \{(i,i), (i,j), (j,i), (j,j)\}.
\end{align}
Note that $G^{(i,j)}(\theta) \in \argmin E_0$ for all $\theta$, and $G^{(i,j)}(0) = \tilde{I}$.  It easy to show that
\begin{align}
\frac{d}{d\theta} \left. \norm{X-G^{(i,j)}(\theta)}_F^2 \right|_{\theta = 0} &= 2 \,\Re(X_{ji} - X_{ij}),\\
\frac{d}{d\theta} \left. \norm{X-iG^{(i,j)}(\theta)}_F^2 \right|_{\theta = 0} &= 2 \,\Im(X_{ji} - X_{ij}).
\end{align}
But these must equal 0 since $\tilde{I} \in \argmin\{\norm{X - G}_F: G
\in \argmin E_0 \}$.  Therefore, $X_{ij} = X_{ji}$.  Next, we show
that $\Im(X_{jj}) = 0$ for $1 \le j \le m$.  Define $G^{(j,j)}$ as
$\tilde{I}$ except with $G^{(j,j)}_{jj} = 1+i\,\Im(X_{jj})$.  It is
clear that if $\Im(X_{jj}) \neq 0$, then $\norm{X-G^{(j,j)}}_F <
\norm{X-\tilde{I}}_F$ which contradicts the assumption that $\tilde{I}
\in \argmin\limits_{G \in \argmin E_0} \norm{X - G}_F$.  Therefore,
$\Im(X_{jj}) = 0$ for all $j$.

Using these facts along with Lemma \ref{lem:genPfrommerAnalysis}, we obtain
\begin{align}
E_0(X) -\min E_0 &= \sum_{i=1}^m \sum_{k=m+1}^N (\lambda_k - \lambda_i)|X_{ik}-\tilde{I}_{ik}|^2 + 4 \sum_{i=1}^m |\lambda_i| \,(\Re(X_{ii}-\tilde{I}_{ii}))^2 \notag\\
  &\qquad + \sum_{i < j} \left(|\lambda_i| + |\lambda_j|\right) \left|(X_{ji}-\tilde{I}_{ji}) + (X_{ij}-\tilde{I}_{ij})\right|^2 + O\left(\norm{X-\tilde{I}}_F^3\right) \label{fullPfrommer} \\
  &\ge (\lambda_{m+1} - \lambda_{m}) \norm{X-\tilde{I}}_F^2 + O\left(\norm{X-\tilde{I}}_F^3\right). 
\end{align}
The condition that $X_{ij} = X_{ji}$ was used to ensure that the $X_{ij} - \tilde{I}_{ij}$ and $X_{ji} - \tilde{I}_{ji}$ terms did not cancel each other out, and condition $\Im(X_{jj}) = 0$ is necessary since the imaginary part of the diagonal entries does not occur in \eqref{fullPfrommer}.  Since the 3rd order terms are bounded near $\tilde{I}$, there exists a $\delta > 0$ such that for $d(X, \tilde{I}) < \delta$,
\begin{equation}
E_0(X) - \min E_0 \ge \alpha (\lambda_{m+1}-\lambda_m) d(X,\tilde{I})^2.
\end{equation}
Since we assumed \emph{without loss of generality} that $\tilde{I} \in \argmin\limits_{G \in \argmin E_0} \norm{X - G}_F$, we have actually proven the stronger statement:  If $d(X, S_0) < \delta$, then 
\begin{equation}
E_0(X) - \min E_0 \ge \alpha (\lambda_{m+1}-\lambda_m) d(X,S_0)^2. \label{convProofLowerBound}
\end{equation}
By Lemma \ref{lem:convwrtmu}, there exists $\mu_0 > 0$ such that $0 \le \mu < \mu_0$ implies $d(S_\mu,S_0) < \delta$.  For such $\mu$, \eqref{convProofLowerBound} is satisfied with $X = X_\mu$ for any $X_\mu \in S_\mu$.
\begin{equation}
E_0(X_\mu) - \min E_0 \ge \alpha (\lambda_{m+1}-\lambda_m) \, d(X_\mu,S_0)^2. \label{quadraticInequal}
\end{equation}

We will now derive one final inequality which will complete the proof.  First we observe that
\begin{equation}
E_0(X_\mu) + \mu \tnorm{X_\mu}_1 = E_\mu(X_\mu) \le E_0(X_0) = E_0(X_0) + \mu \tnorm{X_0}_1,
\end{equation}
for any $X_\mu \in S_\mu$ and $X_0 \in S_0$.  If in particular, we choose $X_0 \in \argmin\limits_{G \in \argmin E_0} \norm{X_\mu - G}_F$, then we can show
\begin{align}
E_0(X_\mu) - \min E_0 &\le \mu \left(\tnorm{X_0}_1 - \tnorm{X_\mu}_1\right) \notag\\
  &\le \mu \tnorm{X_0 - X_\mu}_1 \notag\\
  &\le \mu \tnorm{X_0 - X_\mu}_0 \norm{X_0 - X_\mu}_F \notag\\
  &\le \mu \tnorm{X_0 - X_\mu}_0 \, d(X_\mu, S_0), 
\end{align}
where $\tnorm{Y}_0$ denotes the number of nonzero entries in $Y$.  In particular, we can minimize $\tnorm{X_0 - X_\mu}_0$ to obtain
\begin{equation}
E_0(X_\mu) - \min E_0 \le \mu \, d(X_\mu, S_0) \cdot \min\left\{\tnorm{X_0-X_\mu}_0 : X_0 \in \argmin\limits_{G \in \argmin E_0} \norm{X_\mu - G}_F \right\}. \label{differenceOfE0s}
\end{equation}
We complete the proof by combining \eqref{quadraticInequal} with \eqref{differenceOfE0s} and taking the $\max$ over $X_\mu \in S_\mu$.
\begin{align}
d(S_\mu, S_0) &\le \frac{\mu}{\alpha(\lambda_{m+1}-\lambda_m)} \cdot \max\limits_{X_\mu \in S_\mu} \min \left\{ \tnorm{X_0 - X_\mu}_0 : X_0 \in \argmin\limits_{G \in \argmin E_0} \norm{X_\mu - G}_F \right\} \label{SmutoS0rate} \\
  &\le \frac{\mu \sqrt{Nm}}{\alpha (\lambda_{m+1} - \lambda_m)}. \label{SmutoS0ratePessimistic}
\end{align}
The proof is hence completed. We note that while
\eqref{SmutoS0ratePessimistic} is a simple expression, there may exist
problems for which \eqref{SmutoS0rate} is also a tractable expression
and provides a tighter bound.
\end{proof}

We note that if $X_0 \in S_0$, then $\norm{X_0} = \sqrt{m}$.  So, the relative error does not depend explicitly on $m$.  Theorem \ref{thm:convRateMu} also enables us to prove several corollaries.  As a first application of the theorem, let us prove the convergence rate of $\min E_\mu$ and $E_0(X_\mu)$ to $\min E_0$ (where $X_\mu \in S_\mu$).

\begin{corollary} 
\label{cor:minEmutominE0}
Let $\alpha \in (0,1)$.  For all $\mu > 0$, we have
\begin{equation}
\min E_\mu - \min E_0 \le \mu \min_{X_0 \in S_0} \tnorm{X_0}_1.
\end{equation}
And if $\lambda_m < \lambda_{m+1}$, then there is $\mu_0 > 0$ such that for $0 < \mu < \mu_0$, 
\begin{align}
E_0(X_\mu) - \min E_0 &\le \frac{\mu^2}{\alpha(\lambda_{m+1}-\lambda_m)} \cdot \min \left\{ \tnorm{X_0 - X_\mu}_0^2 : X_0 \in \argmin\limits_{G \in \argmin E_0} \norm{X_\mu - G}_F \right\}, \\
  &\le \frac{\mu^2 Nm}{\alpha (\lambda_{m+1} - \lambda_m)}.
\end{align}
\end{corollary}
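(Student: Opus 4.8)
The first inequality requires no new ideas and I would dispose of it in one line. Let $X_0^\star \in S_0$ attain $\min_{X_0\in S_0}\tnorm{X_0}_1$ (the minimum exists since $S_0 = \argmin E_0$ is compact by Theorem~\ref{thm:minimizingSetOfE0} and $\tnorm{\cdot}_1$ is continuous — this is the set $M$ of Lemma~\ref{lem:convwrtmu}), and let $X_\mu\in S_\mu$ be arbitrary. Since $X_\mu$ minimizes $E_\mu$,
\[
\min E_\mu = E_\mu(X_\mu)\le E_\mu(X_0^\star)=E_0(X_0^\star)+\mu\tnorm{X_0^\star}_1=\min E_0+\mu\min_{X_0\in S_0}\tnorm{X_0}_1 ,
\]
which is the assertion; note $\alpha$ plays no role here.

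For the second inequality the plan is to recombine the two intermediate estimates already established inside the proof of Theorem~\ref{thm:convRateMu}, treating $E_0(X_\mu)-\min E_0$ as the unknown. Fix $\alpha\in(0,1)$, take the $\mu_0>0$ produced there, let $0<\mu<\mu_0$ and $X_\mu\in S_\mu$, and write $a=E_0(X_\mu)-\min E_0\ge 0$, $D=d(X_\mu,S_0)$, and $K=\min\{\tnorm{X_0-X_\mu}_0 : X_0\in\argmin_{G\in\argmin E_0}\norm{X_\mu-G}_F\}$. Because $\mu<\mu_0$ we have $D\le d(S_\mu,S_0)<\delta$, so \eqref{quadraticInequal} (that is, \eqref{convProofLowerBound} applied at $X=X_\mu$) is legitimately available and gives $a\ge \alpha(\lambda_{m+1}-\lambda_m)\,D^2$; and \eqref{differenceOfE0s} gives $a\le \mu D K$. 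If $D=0$ then $a=0$ and the claimed bound holds trivially; otherwise dividing the chain $\alpha(\lambda_{m+1}-\lambda_m)D^2\le \mu DK$ by $D$ yields $D\le \mu K/(\alpha(\lambda_{m+1}-\lambda_m))$, and substituting this back into $a\le \mu D K$ gives $a\le \mu^2K^2/(\alpha(\lambda_{m+1}-\lambda_m))$. Since $K\ge 0$ and $t\mapsto t^2$ is increasing on $[0,\infty)$, $K^2=\min\{\tnorm{X_0-X_\mu}_0^2 : X_0\in\argmin_{G\in\argmin E_0}\norm{X_\mu-G}_F\}$, which is the first displayed inequality of the corollary.

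The second displayed inequality then follows by bounding the sparsity factor crudely: $\tnorm{X_0-X_\mu}_0^2\le Nm$ since this quantity cannot exceed the number of entries of an $N\times m$ matrix, exactly as in the passage from \eqref{SmutoS0rate} to \eqref{SmutoS0ratePessimistic}. I do not expect a genuine obstacle here — the analytic content is entirely in Theorem~\ref{thm:convRateMu}, and the only points needing care are the degenerate case $D=0$ and verifying that the restriction $\mu<\mu_0$ is precisely what makes \eqref{quadraticInequal} applicable.
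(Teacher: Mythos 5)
Your proof follows the paper's route exactly: the first inequality comes from $E_\mu(X_\mu)\le E_\mu(X_0)$ with $X_0$ a minimal-$\ell^1$ element of $S_0$, and the second comes from combining \eqref{quadraticInequal} with \eqref{differenceOfE0s}, which is precisely what the paper does (it states this in two sentences; you have correctly filled in the algebra, including the degenerate case $d(X_\mu,S_0)=0$ and the role of $\mu<\mu_0$ in making \eqref{quadraticInequal} available). So the substance of the first displayed bound is fine.

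One step is not justified as written: your claim that $\tnorm{X_0-X_\mu}_0^2\le Nm$ ``since this quantity cannot exceed the number of entries of an $N\times m$ matrix.'' With the paper's definition of $\tnorm{\cdot}_0$ as the \emph{number} of nonzero entries, what is bounded by $Nm$ is $\tnorm{X_0-X_\mu}_0$ itself, so its square can be as large as $(Nm)^2$ (e.g., when the nearest $X_0$ differs from $X_\mu$ in every entry), and the final bound $\mu^2 Nm/(\alpha(\lambda_{m+1}-\lambda_m))$ does not follow from the first display by this comparison. The correct way to reach it is to sharpen \eqref{differenceOfE0s} via Cauchy--Schwarz, $\tnorm{Y}_1\le \sqrt{\tnorm{Y}_0}\,\norm{Y}_F$, which gives $E_0(X_\mu)-\min E_0\le \mu\sqrt{K}\,d(X_\mu,S_0)$ and hence, after the same elimination of $d(X_\mu,S_0)$, the bound $\mu^2 K/(\alpha(\lambda_{m+1}-\lambda_m))\le \mu^2 Nm/(\alpha(\lambda_{m+1}-\lambda_m))$. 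This is the same reading needed to make the paper's own passage from \eqref{SmutoS0rate} to \eqref{SmutoS0ratePessimistic} (the $\sqrt{Nm}$ there) consistent, so the defect is inherited from the paper's notation rather than a wrong approach on your part; still, your stated justification for the last inequality would not survive scrutiny and should be replaced by the Cauchy--Schwarz argument.
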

\begin{proof}
The first inequality is straightforward by noticing that if $X_\mu \in S_\mu$ and $X_0 \in S_0$, then $E_\mu(X_\mu) \le E_\mu(X_0)$.  The second inequality is obtained by combining \eqref{quadraticInequal} and \eqref{differenceOfE0s}.
\end{proof}

We will perform numerical tests in Section \ref{subsec:numericsconvwrtmu} which suggest that the powers on $\mu$ in these bounds are optimal.  

As a second corollary, we examine how orthonormal the columns of minimizers of $E_\mu$ are.  The minimizers of $E_0$ are known to have orthonormal columns (Theorem \ref{thm:minimizingSetOfE0}), but the addition of the $\ell^1$ term in $E_\mu$ discards this property.  Nonetheless, we can still provide a bound on how close to orthonormal the columns are, as in the next corollary.
\begin{corollary}
\label{cor:orthConv}
Let $\alpha \in (0,1)$.  Then, there exists $\mu_0(\alpha) > 0$ such that for $0 < \mu < \mu_0(\alpha)$ and $X_\mu \in S_\mu$, we have
\begin{equation}
\norm{X_\mu^* X_\mu - I}_F \le \frac{2\mu m \sqrt{N}}{\alpha^2 (\lambda_{m+1} - \lambda_m)}.
\end{equation}
\end{corollary}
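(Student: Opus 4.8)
The plan is to reduce everything to the quantitative estimate of Theorem~\ref{thm:convRateMu} and then expand $X_\mu^*X_\mu$ around a nearby exact minimizer. Fix $X_\mu \in S_\mu$. Since $S_0 = \argmin E_0$ is closed, there is $X_0 \in S_0$ with $\norm{X_\mu - X_0}_F = d(X_\mu, S_0)$; set $E := X_\mu - X_0$, so $\norm{E}_F = d(X_\mu, S_0) \le d(S_\mu, S_0)$. By Theorem~\ref{thm:minimizingSetOfE0}, $X_0$ has orthonormal columns, i.e.\ $X_0^*X_0 = I$, hence
\begin{equation*}
X_\mu^*X_\mu - I = (X_0+E)^*(X_0+E) - X_0^*X_0 = X_0^*E + E^*X_0 + E^*E .
\end{equation*}

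Next I would estimate the three terms in the Frobenius norm. Because $X_0$ has orthonormal columns, $X_0X_0^*$ is an orthogonal projection, so $\norm{X_0^*E}_F^2 = \tr(E^*X_0X_0^*E) \le \tr(E^*E) = \norm{E}_F^2$; taking adjoints gives $\norm{E^*X_0}_F = \norm{X_0^*E}_F \le \norm{E}_F$, and submultiplicativity of the Frobenius norm gives $\norm{E^*E}_F \le \norm{E}_F^2$. Combining these,
\begin{equation*}
\norm{X_\mu^*X_\mu - I}_F \le 2\norm{E}_F + \norm{E}_F^2 \le 2\, d(S_\mu,S_0) + d(S_\mu,S_0)^2 .
\end{equation*}

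Finally I would insert the bound from Theorem~\ref{thm:convRateMu}: there is $\mu_1(\alpha) > 0$ with $d(S_\mu,S_0) \le \mu\sqrt{Nm}/(\alpha(\lambda_{m+1}-\lambda_m))$ for $0<\mu<\mu_1(\alpha)$. The linear term then contributes at most $2\mu\sqrt{Nm}/(\alpha(\lambda_{m+1}-\lambda_m))$, which is already $\le 2\mu m\sqrt{N}/(\alpha^2(\lambda_{m+1}-\lambda_m))$ since $\sqrt{m}\le m$ and $\alpha\le 1$; the gap between these two expressions is a strictly positive multiple of $\mu$, whereas the quadratic term $d(S_\mu,S_0)^2$ is $O(\mu^2)$, so for $\mu$ small enough the quadratic term is absorbed into that gap. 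Taking $\mu_0(\alpha)$ to be $\mu_1(\alpha)$ intersected with the explicit threshold that enforces this absorption (an elementary inequality of the form $\mu \le c(N,m,\alpha)(\lambda_{m+1}-\lambda_m)$) yields the stated bound. I expect no real obstacle; the only point to watch is this last bookkeeping, which goes through precisely because the claimed constant is deliberately loose — an $m$ in place of $\sqrt{m}$ and an $\alpha^2$ in place of $\alpha$ — leaving exactly the slack needed to swallow the higher-order remainder.
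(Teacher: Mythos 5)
Your proof is correct, and while it rests on the same foundation as the paper's proof — namely, Theorem~\ref{thm:convRateMu} plus the choice of a nearest point $X_0 \in S_0$ — the algebraic route is genuinely different. The paper decomposes telescopically, $X_\mu^*X_\mu - X_0^*X_0 = X_\mu^*(X_\mu - X_0) + (X_\mu - X_0)^*X_0$, and then bounds each factor by Frobenius submultiplicativity, incurring a $\norm{X_\mu}_F + \norm{X_0}_F$ factor; it controls $\norm{X_\mu}_F$ by writing it as $\le \sqrt{m} + d(S_\mu,S_0)$ and shrinking $\mu$ so that $2\sqrt{m} + d(S_\mu,S_0) \le 2\sqrt{m}/\alpha$. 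You instead expand in $E = X_\mu - X_0$ to get $X_0^*E + E^*X_0 + E^*E$ and then exploit that $X_0 X_0^*$ is an orthogonal projection, which gives $\norm{X_0^*E}_F \le \norm{E}_F$ with no $\sqrt m$ factor. Your intermediate bound $2\norm{E}_F + \norm{E}_F^2$ is therefore tighter than the paper's $(2\sqrt{m}/\alpha)\norm{E}_F$ by roughly a factor of $\sqrt{m}/\alpha$ at leading order, and you then spend the slack between $\sqrt{Nm}/\alpha$ and $m\sqrt N/\alpha^2$ to absorb the quadratic remainder. Your closing observation — that the gap $\frac{2\mu\sqrt{N}\sqrt{m}}{\alpha(\lambda_{m+1}-\lambda_m)}\bigl(\tfrac{\sqrt{m}}{\alpha}-1\bigr)$ is strictly positive because $\sqrt{m}\ge 1 > \alpha$, and hence dominates the $O(\mu^2)$ term below an explicit threshold — is correct and closes the argument; it is a bit terser than one would ultimately want in print, but there is no gap. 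Net, your derivation shows the constant in the corollary is quite pessimistic: the leading-order dependence is really $\sqrt{Nm}/\alpha$, not $m\sqrt{N}/\alpha^2$.
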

\begin{proof}
Choose $\mu$ small enough so that Theorem \ref{thm:convRateMu} applies and $d(S_\mu,S_0) \le 2\sqrt{m}(\frac{1}{\alpha} - 1)$.  Let $X_\mu \in S_\mu$.  Let $X_0 \in \argmin\limits_{X \in S_0} \norm{X_\mu - X}_F$.  Then we can estimate,
\begin{align}
\norm{X_\mu^* X_\mu - I}_F &= \norm{X_\mu^* X_\mu - X_0^* X_0}_F \notag\\
  &\le \norm{X_\mu^* (X_\mu - X_0)}_F + \norm{(X_\mu^* - X_0^*)X_0}_F \notag\\
  &\le \left(\norm{X_\mu}_F + \norm{X_0}_F\right) \norm{X_\mu - X_0}_F \notag\\
  &\le \frac{2\mu m \sqrt{N}}{\alpha^2 (\lambda_{m+1} - \lambda_m)},
\end{align}
where the last line uses Theorem \ref{thm:convRateMu} and the fact that $\norm{X_0}_F = \sqrt{m}$ since the columns of $X_0$ are orthonormal.
\end{proof}

As our final corollary, we examine the convergence rate of the density matrix.  Note that if we choose any $X_0 \in S_0$, then the density matrix $P_{0} = X_0 X_0^*$ will be the projection operator onto the low-lying eigenspace.  In what follows, if $X_\mu \in S_\mu$, then we will consider two different approximations for the density matrix.
\begin{align}
\tilde{P}_{X_\mu} &= X_\mu X_\mu^*,\\
P_{X_\mu} &= X_\mu(X_\mu^*X_\mu)^{-1}X_\mu^*.
\end{align}
$P_{X_\mu}$ is a legitimate projection matrix while $\tilde{P}_{X_\mu}$ is approximately a projection matrix.  The range of each is $\col(X_\mu)$.  We can obtain the following estimates for the convergence of $\tilde{P}_{X_\mu}$ and $P_{X_\mu}$ to $P_0$.

\begin{corollary}
\label{cor:densityConv}
Let $\alpha \in (0,1)$.  Then, there exists $\mu_0(\alpha) > 0$ such that for $0 < \mu < \mu_0(\alpha)$, we have
\begin{align}
\norm{\tilde{P}_{X_\mu} - P_{0}}_F &\le \frac{2\mu m \sqrt{N}}{\alpha^2 (\lambda_{m+1} - \lambda_m)}, \label{convWRTmuPtilde} \\
\norm{P_{X_\mu} - P_{0}}_F &\le \frac{4\mu m \sqrt{N}}{\alpha^2 (\lambda_{m+1} - \lambda_m)}. \label{convWRTmuP}
\end{align}
\end{corollary}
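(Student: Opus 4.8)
The plan is to reduce both estimates to Theorem~\ref{thm:convRateMu} and Corollary~\ref{cor:orthConv}. Throughout, fix $X_\mu \in S_\mu$, take $\mu$ small enough that Theorem~\ref{thm:convRateMu} applies and $d(S_\mu,S_0) \le 2\sqrt{m}\left(\tfrac1\alpha - 1\right)$ (in particular $X_\mu$ then has full column rank, since it is close to $S_0$ all of whose elements have orthonormal columns, so $P_{X_\mu}$ is well defined), and choose $X_0 \in \argmin_{X \in S_0}\norm{X_\mu - X}_F$, so that $P_0 = X_0 X_0^*$ and $\norm{X_0}_F = \sqrt m$.

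For the bound \eqref{convWRTmuPtilde} I would simply repeat the computation from the proof of Corollary~\ref{cor:orthConv}: write $\tilde P_{X_\mu} - P_0 = X_\mu X_\mu^* - X_0 X_0^*$, split it via the triangle inequality into $X_\mu(X_\mu - X_0)^*$ and $(X_\mu - X_0)X_0^*$, use submultiplicativity of the Frobenius norm, then the bound $\norm{X_\mu}_F + \norm{X_0}_F \le 2\sqrt m/\alpha$ (which follows from $\norm{X_0}_F = \sqrt m$ and $\norm{X_\mu}_F \le \sqrt m + d(X_\mu,S_0)$ together with the choice of $\mu$), and finally $\norm{X_\mu - X_0}_F = d(X_\mu,S_0) \le \mu\sqrt{Nm}/(\alpha(\lambda_{m+1}-\lambda_m))$ from Theorem~\ref{thm:convRateMu}.

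For \eqref{convWRTmuP} the idea is to route through $\tilde P_{X_\mu}$: $P_{X_\mu} - P_0 = (P_{X_\mu} - \tilde P_{X_\mu}) + (\tilde P_{X_\mu} - P_0)$, and to estimate the new term $\norm{P_{X_\mu} - \tilde P_{X_\mu}}_F$. Writing a thin SVD $X_\mu = U\Sigma V^*$ with $U \in \mathbb{C}^{N\times m}$, $U^*U = I$, $\Sigma$ diagonal positive, and $V \in U(m)$, a direct computation gives $\tilde P_{X_\mu} = U\Sigma^2 U^*$ and $P_{X_\mu} = UU^*$, hence $P_{X_\mu} - \tilde P_{X_\mu} = U(I - \Sigma^2)U^*$. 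Since the Frobenius norm is unchanged under left and right multiplication by matrices with orthonormal columns, $\norm{P_{X_\mu} - \tilde P_{X_\mu}}_F = \norm{I - \Sigma^2}_F = \norm{X_\mu^*X_\mu - I}_F$, which is bounded by $2\mu m\sqrt N/(\alpha^2(\lambda_{m+1}-\lambda_m))$ thanks to Corollary~\ref{cor:orthConv}. Combining this with \eqref{convWRTmuPtilde} through the triangle inequality produces the stated factor of $4$.

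The computations here are all routine; the only point requiring a moment's thought is the identity $\norm{P_{X_\mu} - \tilde P_{X_\mu}}_F = \norm{X_\mu^*X_\mu - I}_F$, i.e., recognizing that $P_{X_\mu} - \tilde P_{X_\mu}$ is supported on $\col(X_\mu)$ and, in an orthonormal basis of that subspace, equals $I - X_\mu^*X_\mu$ up to unitary conjugation. Introducing the thin SVD of $X_\mu$ at the outset makes this transparent, so I expect no real obstacle beyond bookkeeping.
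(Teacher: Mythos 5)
Your proof is correct and follows essentially the same route as the paper's: the same choice of $\mu$ and $X_0$, the same two-term triangle-inequality decomposition for $\tilde P_{X_\mu}-P_0$, and the same splitting of $P_{X_\mu}-P_0$ through $\tilde P_{X_\mu}$. The only difference is that you establish the key identity $\norm{P_{X_\mu}-\tilde P_{X_\mu}}_F=\norm{X_\mu^*X_\mu-I}_F$ via the thin SVD of $X_\mu$ whereas the paper does it by a direct cyclic-trace computation; both are routine and equivalent, and your SVD formulation is arguably a bit more transparent.
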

\begin{proof}
Choose $\mu$ small enough so that Theorem \ref{thm:convRateMu} applies and $d(S_\mu,S_0) \le 2\sqrt{m}(\frac{1}{\alpha} - 1)$.  Let $X_\mu \in S_\mu$ and $X_0 \in \argmin_{X \in S_0} \norm{X_\mu - X}_F$.  Then we can estimate,
\begin{align}
\norm{\tilde{P}_{X_\mu} - P_{0}}_F &= \norm{X_\mu X_\mu^* - X_0 X_0^*}_F \notag\\
  &\le \norm{X_\mu (X_\mu^* - X_0^*)}_F + \norm{(X_\mu - X_0)X_0^*}_F \notag\\
  &\le \left(\norm{X_\mu}_F + \norm{X_0}_F\right) \norm{X_\mu - X_0}_F \notag\\
  &\le \frac{2\mu m \sqrt{N}}{\alpha^2 (\lambda_{m+1} - \lambda_m)}.
\end{align}
A slightly more complicated calculation can be used to obtain \eqref{convWRTmuP}.
\begin{align}
\norm{P_{X_\mu} - P_{0}}_F &= \norm{X_\mu (X_\mu^*X_\mu)^{-1} X_\mu^* - X_0 X_0^*}_F \notag\\
  &\le \norm{X_\mu (X_\mu^*X_\mu)^{-1} X_\mu^* - X_\mu X_\mu^*}_F + \norm{\tilde{P}_{X_\mu} - P_0}_F \notag\\
  &= \norm{I - X_\mu^*X_\mu}_F + \norm{\tilde{P}_{X_\mu} - P_0}_F \notag\\
  &\le \frac{4\mu m \sqrt{N}}{\alpha^2 (\lambda_{m+1} - \lambda_m)}.
\end{align}
The last line uses Corollary \ref{cor:orthConv} and \eqref{convWRTmuPtilde}.  We went from the second to the third line by use of
\begin{align}
\norm{X_\mu (X_\mu^* X_\mu)^{-1} X_\mu^* - X_\mu X_\mu^*}_F^2 &= \tr \left[X_\mu \left[(X_\mu^* X_\mu)^{-1} - I\right] X_\mu^* X_\mu \left[(X_\mu^* X_\mu)^{-1} - I\right] X_\mu^* \right] \notag\\
  &= \tr \left[ X_\mu^* X_\mu \left[(X_\mu^* X_\mu)^{-1} - I\right] X_\mu^* X_\mu \left[(X_\mu^* X_\mu)^{-1} - I\right]\right] \notag\\
  &= \tr\left[ (I - X_\mu^*X_\mu) (I - X_\mu^*X_\mu) \right] \notag\\
  &= \norm{I - X_\mu^*X_\mu}_F^2.
\end{align}
\end{proof}

\subsection{Local minima of $E_\mu$}

In Theorem \ref{thm:localglobalminima}, we showed that all local
minima of $E_0$ are global minima.  Unfortunately, this is no longer
true for $E_\mu$.  However, we can provide some information about the
local minima of $E_\mu$.  The next theorem states that local minima of
$E_\mu$ are in a sense ``generated by'' critical points of $E_0$.
Additionally, it states that these local minima of $E_\mu$ can be
forced to be arbitrarily close to the critical points of $E_0$ by
choosing $\mu$ sufficiently small.  We denote the set of local minima
of $E_\mu$ by $S_\mu^\text{loc}$ and denote the set of critical points
of $E_0$ by $C_p$.

\begin{theorem}
\label{thm:localMinGenByCPs}
For every $\epsilon > 0$, there exists $\delta > 0$ such that $0 \le \mu < \delta$ implies $d(S_\mu^\text{loc}, C_p) < \epsilon$.
\end{theorem}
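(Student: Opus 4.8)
The plan is to argue by contradiction via compactness, the two analytic ingredients being (i) a uniform-in-$\mu$ bound on the size of any local minimizer of $E_\mu$, valid for all $\mu\ge 0$, and (ii) the first-order optimality condition, which exhibits a local minimizer of $E_\mu$ as an ``approximate critical point'' of $E_0$ with gradient error $\mathcal{O}(\mu)$.

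First I would prove that there is $R>0$, depending only on $m$ and the extreme eigenvalues of $H$, such that $\norm{X}_F\le R$ for every $X\in S_\mu^\text{loc}$ and every $\mu\ge 0$. The idea is to test local minimality of $X\neq 0$ along the ray $s\mapsto sX$. Since $\tnorm{sX}_1=s\tnorm{X}_1$ for $s>0$, the function $g(s)=E_\mu(sX)=2as^2-bs^4+\mu cs$ is a smooth (quartic-plus-linear) function of $s$ near $s=1$, where $a=\tr[X^*HX]\le 0$, $b=\tr[H(XX^*)^2]\le 0$, and $c=\tnorm{X}_1\ge 0$. Hence $g'(1)=0$, i.e. $-b=-a-\tfrac{\mu c}{4}\le -a$. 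Using $H\prec 0$ one has $-b=\tr[(-H)(XX^*)^2]\ge |\lambda_N|\,\norm{X^*X}_F^2\ge \frac{|\lambda_N|}{m}\norm{X}_F^4$ and $-a=\tr[X^*(-H)X]\le |\lambda_1|\,\norm{X}_F^2$, so $\norm{X}_F^2\le m|\lambda_1|/|\lambda_N|$; thus $R=\sqrt{m|\lambda_1|/|\lambda_N|}$ works, the case $X=0$ being trivial since $0\in C_p$.

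Next I would record the first-order condition: as $E_\mu=E_0+\mu\tnorm{\cdot}_1$ is a smooth function plus a convex function, any local minimizer $X$ satisfies $0\in\nabla E_0(X)+\mu\,\partial\tnorm{X}_1$; since every element of $\partial\tnorm{X}_1$ has entries of modulus at most $1$, this gives $\norm{\nabla E_0(X)}_F\le \mu\sqrt{Nm}$ for all $X\in S_\mu^\text{loc}$.

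Finally, suppose the theorem fails. Then there are $\epsilon>0$, a sequence $\mu_k\to 0^+$, and $X_k\in S_{\mu_k}^\text{loc}$ with $\inf_{b\in C_p}\norm{X_k-b}_F\ge \epsilon/2$. By the boundedness step $\norm{X_k}_F\le R$, so after passing to a subsequence $X_k\to X_\infty$; by the first-order step $\norm{\nabla E_0(X_k)}_F\le \mu_k\sqrt{Nm}\to 0$, and continuity of $\nabla E_0$ gives $\nabla E_0(X_\infty)=0$, i.e. $X_\infty\in C_p$. But then $\inf_{b\in C_p}\norm{X_k-b}_F\le\norm{X_k-X_\infty}_F\to 0$, a contradiction. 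The main obstacle is the uniform boundedness in the first step --- in particular verifying that the $\mu$-dependence drops out so a single $R$ serves all (small) $\mu$ --- which the ray computation handles precisely because the $\mu c$ term enters $g'(1)=0$ with a favorable sign; the remaining steps are routine.
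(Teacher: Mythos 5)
Your proposal is correct and follows essentially the same route as the paper: the first-order optimality condition giving $\norm{\nabla E_0(X)}_F \le \mu\sqrt{Nm}$ for $X \in S_\mu^{\text{loc}}$, a uniform-in-$\mu$ bound on local minimizers obtained by restricting $E_\mu$ to rays through the origin, and a compactness/contradiction argument using continuity of $\nabla E_0$. The only minor difference is in the boundedness step, where you extract the explicit radius $R=\sqrt{m|\lambda_1|/|\lambda_N|}$ from stationarity along the ray at $s=1$, while the paper deduces boundedness from the double-well profile of $E_\mu$ on lines through the origin together with compactness of the set of directions.
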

\begin{proof}
First we need a preliminary result:  Let $X_\text{min} \in S_\mu^\text{loc}$ for some $\mu > 0$.  Let $\{x_i\}$ be the coordinate directions corresponding to the nonzero entries of $X_\text{min}$ and let $\{z_j\}$ be coordinate directions corresponding to the zero entries of $X_\text{min}$.  Then we must have
\begin{align*}
\frac{\partial E_0}{\partial x_i} (X_\text{min}) &\in \{\pm \mu\} \\
\left|\frac{\partial E_0}{\partial z_j} (X_\text{min})\right| &\le \mu
\end{align*}
for all $i,j$.  This implies $\norm{\nabla E_0 (X_\text{min})}_F \le \mu \sqrt{Nm}$.  

Next, let $U \in \mathbb{C}^{N\times m}$ such that $\norm{U}_F = 1$.  $E_0$ along the direction of $U$ is given by 
\begin{align}
E_0(tU) &= -\tr[U^*UU^*HU] t^4 + 2 \tr[U^*HU] t^2 \notag\\
  &= c_1(U) t^4 + c_2(U) t^2. \label{E0ThroughOrigin}
\end{align}
Since $H$ is negative definite, $c_2(U) < 0 < c_1(U)$.  So, on any line through the origin, $E_0$ has a double well potential shape.  All critical points of $E_0$ must either be at the origin or at one of the minima along some line through the origin.  By taking the derivative with respect to $t$, we see that if $Y \in C_p$, then $\norm{Y}_F = 0$ or $\norm{Y}_F = \sqrt{\frac{-c_2(Y/\norm{Y}_F)}{c_1(Y/\norm{Y}_F)}}$.  Note that this expression is continuous with respect to $Y$ for $Y \ne 0$.  So, we can apply a compactness argument on the projective space to see that the set $C_p$ is bounded.

Now to prove the lemma, we suppose by way of contradiction that the lemma is false.  Then, there exists $\epsilon > 0$ and sequences $\{\mu_k\}$ and $\{X_k\}$ such that $\mu_k \to 0$ and $X_k \in S_{\mu_k}^\text{loc}$ and $d(X_k,C_p) \ge \epsilon$.  First note that any local minima $Y$ of $E_\mu$ must have a Frobenious norm less than $\sqrt{\frac{-c_2(Y/\norm{Y}_F)}{c_1(Y/\norm{Y}_F)}}$.  This is clear because
\begin{equation}
E_\mu(tU) = c_1(U) t^4 + c_2(U) t^2 + \tnorm{U}_1 \mu |t|.  \label{EmuThroughOrigin}
\end{equation}
Therefore, the set $\{X_k\}$ is bounded.  So now by compactness, there exists a convergent subsequence $X_{k_l} \to Z$.  Also, note that $\norm{\nabla E_0(X_k)}_F \le \mu_k \sqrt{Nm}$.  Since $\mu_k \to 0$ and the gradient of $E_0$ is continuous, we have $\norm{\nabla E_0(Z)}_F = 0$.  That is, $Z \in C_p$.  Therefore, there exists $L$ such that $d(X_{k_L},C_p) < \epsilon$ which contradicts our assumption.
\end{proof}

It is also worth commenting on the existence of local minima of $E_\mu$.  We note that the origin is a local minima of $E_\mu$ for all $\mu > 0$ (as can be seen from \eqref{EmuThroughOrigin}).  We also note that local minima can be generated for all $\mu > 0$ at points other than the origin.  As an example, if $H = \bigl[ \begin{smallmatrix} 
-1 & 0 \\
0 & -2 \end{smallmatrix} \bigr]$ and $X \in \mathbb{C}^{2 \times 1}$, then the critical points $[e^{\alpha i}, 0]^T$ for all $\alpha \in \R$ generate a local minima of $E_\mu$ for all $\mu > 0$.  However, for small $\mu$, the basins around the local minima are small, so algorithms are less likely to converge to those points.  

\section{Algorithms}

Having determined many nice analytic properties of the functional
$E_\mu$, we now seek a practical way to find its minima using
numerical optimization algorithms.  We have two main objectives for
our algorithms.  First, we want them to find a minimum quickly.
Second, we want them to maintain sparsity from iteration to iteration.
The hope is that sparsity can be used to speed up the computational
time per iteration through the use of sparse matrix arithmetic.  We
choose the Iterative Shrinkage--Thresholding Algorithm (ISTA)
\cite{beck2009fast} as our base algorithm to which we subsequently
make modifications.  This algorithm is a natural choice because it is
designed to deal with objective functions which are the sum of a
differentiable function ($E_0$) and a convex function (the $\ell^1$
penalty term).

We will present a total of six different algorithms for minimizing
$E_\mu$.  All of the algorithms are essentially modifications of ISTA.
They can be divided into two main categories: block and non--block.
The block algorithms update $X$ column by column instead of updating
all entries of $X$ at once.  Additionally, our algorithms could be
categorized based on whether they use traditional backtracking or a
more aggressive dynamic backtracking we propose.  The numerical
results in Section \ref{subsec:compareConvOfAlgs} show that the
algorithms with this dynamic backtracking converge much faster than
the algorithms using traditional backtracking.

\subsection{Non--block version}

ISTA with backtracking \cite{beck2009fast} is given in algorithm \ref{alg:ISTA}.  In the algorithm, $T_\alpha$ is the shrinkage operator and $\circ$ is an entrywise product.  More specifically, 
\begin{align}
(T_\alpha Y)_{ij} &= \left\{ \begin{array}{cl}
         0 & \mbox{if $|Y_{ij}|\le \alpha$},\\
        (|Y_{ij}| - \alpha) \, e^{i \arg(Y_{ij})} & \mbox{if $|Y_{ij}|>\alpha$}.\end{array} \right.\\
A \circ B &= \sum_i \sum_j A_{ij} B_{ij}.
\end{align}
where $\arg(z)$ denotes the complex argument of $z$.  The algorithm is essentially a steepest descent (corresponding to
$E_0$) algorithm combined with an entrywise soft thresholding. The
backtracking is used to choose the step size, which will be further
explained below.

\begin{algorithm}[H]
\caption{ISTA with backtracking}
\label{alg:ISTA}
\begin{algorithmic}[1]
\Require $X_0 \in \mathbb{R}^{N \times m}, \, L_0 > 0, \, \eta > 1$
\For{$k = 1,2,...$}
\Comment{Loop until convergence is achieved}
\State $L_k^{(1)} \leftarrow L_{k-1}$
\ForUntilBegin{$j = 1,2,...$}

\Comment{Backtracking iteration}
	\State $X_k^{(j)} \leftarrow T_{\mu/L_k^{(j)}} \left(X_{k-1} - \frac{1}{L_k^{(j)}} \nabla E_0(X_{k-1}) \right)$
	\State $L_k^{(j+1)} \leftarrow \eta L_k^{(j)}$
\ForUntilEnd{$E_0(X_k^{(j)}) \le E_0(X_{k-1}) + \nabla E_0(X_{k-1}) \circ (X_k^{(j)}-X_{k-1}) + \frac{L_k^{(j)}}{2} \norm{X_k^{(j)}-X_{k-1}}_F^2$}
\State $X_k \leftarrow X_k^{(j)}$
\State $L_k \leftarrow L_k^{(j)}$
\EndFor
\end{algorithmic}
\end{algorithm}

The condition $\norm{X_k - X_{k-1}}_F < \mathrm{tol}$ can be easily
used as a convergence criteria as the quantity $\norm{X_k -
  X_{k-1}}_F$ needs to be calculated anyway.

\subsubsection{Dynamic backtracking}

Numerical results indicated that the convergence could be quite slow
using the backtracking version of ISTA.  So we improved the algorithm
by choosing $L_k$ in a more dynamic fashion.  At the start of each
iteration, we choose the value of $L_k^{(1)}$ based on the previous
iterations.  Backtracking is still used if necessary.

To see the idea behind this modification, it is necessary to recall the derivation of ISTA.  In particular, the basic idea is to replace the objective function $E_\mu$ with an approximation $\tilde{E}_\mu$ of the objective function near $X_{k-1}$,
\begin{equation}
\tilde{E}_\mu(X_{k-1};X) = E_0(X_{k-1}) + \nabla E_0(X_{k-1}) \circ (X-X_{k-1}) + \frac{L_k}{2} \norm{X-X_{k-1}}_F^2 + \mu \tnorm{X}_1.
\end{equation}
Then each step of ISTA is just given by $X_k = \argmin \tilde{E}_\mu(X_{k-1};X)$.  So, we can see that if $L_k$ is a good approximation to the second directional derivative in the direction $X - X_{k-1}$, then $\tilde{E}_\mu$ would be a good approximation of $E_\mu$ in that direction.  For this reason, we choose $L_k^{(1)}$ by approximating the second directional derivative.  In particular, when $k \ge 2$, we make the following modification to line 2 of algorithm \ref{alg:ISTA}.
\smallskip
\begin{algorithmic}[1]
\makeatletter\setcounter{ALG@line}{1}\makeatother
\State $\displaystyle L_k^{(1)} \leftarrow c_1 \, \frac{\norm{\nabla E_0(X_{k-1}) - \nabla E_0(X_{k-2})}_F}{\norm{X_{k-1}-X_{k-2}}_F}$
\end{algorithmic}
\smallskip 
where $c_1 > 1$ (we found that $c_1 = 1.5$ worked well).  The constant $c_1$ is used to prevent our guess for $L_k^{(1)}$ from being too small (which would lead to backtracking).  This guess for $L_k^{(1)}$ will typically be much smaller than if we used the traditional backtracking approach $L_k^{(1)} = L_{k-1}$.  This allows us to take larger step sizes which leads to faster convergence.

The drawbacks of this method are the extra computational cost
associated with calculating $L_k^{(1)}$ and the extra cost of having
to do more backtracking steps.  However, our numerical results show
that these extra costs are worthwhile because the accelerated
convergence leads to many fewer iterations being required.

Furthermore, we will now provide another modification to the algorithm
which greatly reduces the number of backtracks that are required.
We can reduce the number of backtracks by modifying line 5 in the algorithm.  If line 2 happens to give a guess for $L_k$ which is way too small, then multiple backtracks will be required in a single iteration.  To reduce the occurrence of multiple backtracks, we can choose $L_k^{(j+1)}$ in a smarter way.  In particular, we can replace line 5 by
\smallskip
\begin{algorithmic}[1]
\makeatletter\setcounter{ALG@line}{4}\makeatother
\State $\displaystyle L_k^{(j+1)} \leftarrow c_2 \, \frac{2\left[ E_0(X_k^{(j)}) - E_0(X_{k-1}) - \nabla E_0(X_{k-1}) \circ (X_k^{(j)}-X_{k-1}) \right]}{\lVert X_{k}^{(j)}-X_{k-1}\rVert_F^2}$
\end{algorithmic}
\smallskip
where $c_2 > 1$ (we used $c_2 = 2$).  Notice that this calculation is essentially ``free'' since all these terms need to be calculated anyway for the break condition in line 6.  The new line 5 is obtained by simply solving for $L_k^{(j)}$ in the break condition (and then multiplying by a constant $c_2$ greater than 1 to try to avoid multiple backtracks per iteration).  In practice, using $c_2 > 1.1$ seemed to virtually eliminate the occurrence of multiple backtracks per iteration.

\subsection{Block version}
\label{subsec:blockAlg}

\begin{algorithm}[h]
\caption{Block version with dynamic backtracking}
\label{alg:blockISTA}
\begin{algorithmic}[1]
\Require $X_0 \in \mathbb{R}^{N \times m}, \, L_1^{(1)} > 0, \, c_1, c_2 > 1$
\For{$k = 1,2,...$}
\Comment{Loop until convergence is achieved}
	\State Choose $b_k \in \{1, 2, ..., m\}$.
	\State $\displaystyle{L_k^{(1)} \leftarrow c_1 \, \frac{\norm{\nabla_{b_k} E_0(X_{k-1}) - \nabla_{b_k} E_0(X_{p(b_k)-1})}_F}{\norm{X_{k-1,b_k}-X_{p(b_k)-1,b_k}}_F}} \qquad $ (if $k \ge 2$)
	\ForUntilBegin{j = 1,2,...}
	
	\Comment{Backtracking iteration}
		\State $X_{k,b_k}^{(j)} \leftarrow T_{\mu/L_k^{(j)}} \left(X_{k-1,b_k} - \dfrac{1}{L_k^{(j)}} \nabla_{b_k} E_0(X_{k-1}) \right)$
		\State $\displaystyle{L_k^{(j+1)} \leftarrow c_2 \, \frac{2\left[ E_0(X_k^{(j)}) - E_0(X_{k-1}) - \nabla_{b_k} E_0(X_{k-1}) \circ (X_{k,b_k}^{(j)}-X_{k-1,b_k}) \right]}{\norm{X_{k,b_k}^{(j)}-X_{k-1,b_k}}_F^2}}$
	\ForUntilEnd{$E_0(X_k^{(j)}) \le E_0(X_{k-1}) + \nabla_{b_k} E_0(X_{k-1}) \circ (X_{k,b_k}^{(j)}-X_{k-1,b_k}) + \dfrac{L_k^{(j)}}{2} \norm{X_{k,b_k}^{(j)}-X_{k-1,b_k}}_F^2$}
\State $X_k \leftarrow X_{k-1}$
\State $X_{k,b_k} \leftarrow X_{k,b_k}^{(j)}$
\Comment{Only the $b_k$ block changes}
\EndFor
\end{algorithmic}
\end{algorithm}

We now present our block algorithm.  This algorithm (with dynamic backtracking) is given in algorithm \ref{alg:blockISTA}.  In order to implement it, we must first choose how to split up $X$ into blocks.  We choose each column of $X$ to be its own block.  This gives a total of $m$ blocks.  By choosing the blocks this way, $m$ iterations of the block algorithm has essentially the same cost as one iteration of the non-block algorithm.

The following notation is used in algorithm \ref{alg:blockISTA}.  $X_{\ell,b_k}$ is the $b_k$ block of $X_\ell$, and $\nabla_{b_k}$ is the gradient with respect to the $b_k$ block, holding all other blocks constant.  We also make use of the function 
\begin{equation}
p(b_k) = \left\{
     \begin{array}{cl}
       \max \{\ell < k : b_\ell = b_k\} & \text{ if } \{\ell < k : b_\ell = b_k\} \ne \emptyset, \\
       0 & \text{ if } \{\ell < k : b_\ell = b_k\} = \emptyset.
     \end{array}
   \right.
\end{equation}
That is, $p(b_k)$ is the most recent iteration that the $b_k$ block was updated, and if the $b_k$ block has not yet been updated, then we define $p(b_k) = 0$.

There are clearly many possible ways to choose $b_k$ in line 2 of each iteration, but for our numerical tests in Section \ref{sec:numerics}, we will only consider two such strategies.  In our ``sequential'' strategy, we simply loop sequentially through the blocks ($1,2,...,m,1,2...$).  In what we call the ``random'' strategy for choosing $b_k$, we choose a random permutation of $\{1,...,m\}$ and then use that for the first $m$ iterations.  For the next $m$ iterations, we choose a new random permutation of $\{1,...,m\}$, and so on.  In this way, each block is updated exactly once between iterations $(q-1)m+1$ and $qm$ for any $q \in \mathbb{N}$.  With our choices of $b_k$, we note that $m$ iterations of the block algorithm is approximately the same as one iteration of the non--block algorithm (since each block is updated exactly once).  So, when we compare the two methods, we think of $m$ iterations of the block algorithm as being a single iteration.

\section{Numerical results}
\label{sec:numerics}

In this section we perform numerical tests to verify our theoretical
results (Section \ref{subsec:numericsconvwrtmu}), compare the
performance of our proposed algorithms (Section
\ref{subsec:compareConvOfAlgs}), and explore some of the more
practical concerns of our method (Sections \ref{subsec:localMinTest}, \ref{subsec:numericsICs} and \ref{subsec:dynMu}).  We will use the following example,
borrowed from \cite{gao2009orbital}, as our test problem in the
remainder of the paper.
\begin{align}
H &= -\frac{1}{2}\frac{d^2}{dx^2} + V(x) - \eta, \\
V(x) &= \alpha \sum_j e^{-(x-r_j)^2/(2\beta^2)},
\end{align}
where $\eta$ is a constant chosen so that $H$ is negative definite.
The domain of the problem is $[0,10]$ with periodic boundary
conditions, and $\{r_j\}_{j=1}^{10} = \{0.5,1.5,...,9.5\}$ represent
the atom positions.  We discretize on a uniform spacial grid and use
the centered difference formula to discretize the second derivative.
We take $m = 10$, and the initial condition $X_0$ is set as follows.
The $i^\text{th}$ column of $X_0$ has support of width $2L+1$ centered
at $r_i$.  On the support, the initial condition is set by generating
random numbers between 0 and $2/(2L+1)$ so that the expected 2--norm
of each column is 1.  Unless otherwise specified, we will use the
parameters $\alpha = -100$ and $\beta = 0.1$.  In this case, the
spectral gap is $\lambda_{m+1} - \lambda_m \approx 54.2$.

\subsection{Convergence with respect to $\mu$}
\label{subsec:numericsconvwrtmu}

The first test we perform is to confirm the theoretical results in
Theorem \ref{thm:convRateMu} and Corollary \ref{cor:minEmutominE0}.  We
list the minimal value of $E_\mu$ (and the value of $E_0$ at the
minimum) compared to the minimal value of $E_0$ to validate Corollary
\ref{cor:minEmutominE0}.  We also investigate the distance from a
minimizer of $E_\mu$ to the set $S_0$ to test the linear convergence
rate guaranteed by Theorem \ref{thm:convRateMu}.

To perform this test, we take $N = 800$, and use two different choices for $\alpha$ which give us different spectral gaps.  In particular, we choose $\alpha = -100$ and $\alpha = -10$ which give gaps of 54.2 and 4.36 respectively.

The results for different values of $\mu$ are tabulated in Tables \ref{tab:convwrtmuLargeGap} and \ref{tab:convwrtmuSmallGap}.  From these results, we see that the powers of $\mu$ in our theoretical results appear to be optimal.  We also see that the problem with a smaller gap converges slower than the one with a larger gap in accordance with the constant out front in Theorem \ref{thm:convRateMu} and Corollary \ref{cor:minEmutominE0}.

To get an intuitive view of what is happening to the solution as we change $\mu$, we have plotted the 5th column of $X$ in Figure \ref{fig:sparsityWRTmuAndGap}.  From this figure, we can see the localizing behavior of the orbital as $\mu$ gets larger.  We also see that for small $\mu$, the orbital is not much different than when $\mu = 0$.

\begin{table}
\centering
\begin{tabular}{ c|cc|cc|cc }
  $\mu$ & $\min E_\mu - \min E_0$ & order & $E_0(X_\mu) - \min E_0$ & order & $d(X_\mu, S_0)$ & order \\
  \hline
  $2^{-8}$ & 2.4412e-01 & -- & 7.5520e-05 & -- & 1.1147e-03 & -- \\
  $2^{-9}$ & 1.2208e-01 & 0.99976 & 2.1369e-05 & 1.8213 & 5.9890e-04 & 0.89628 \\
  $2^{-10}$ & 6.1045e-02 & 0.99987 & 5.8575e-06 & 1.8672 & 3.1342e-04 & 0.93420 \\
  $2^{-11}$ & 3.0524e-02 & 0.99993 & 1.6183e-06 & 1.8558 & 1.6449e-04 & 0.93012 \\
  $2^{-12}$ & 1.5262e-02 & 0.99996 & 4.4340e-07 & 1.8678 & 8.5349e-05 & 0.94653 \\
\end{tabular}
\caption{Large gap problem:  Convergence with respect to $\mu$.}
\label{tab:convwrtmuLargeGap}
\end{table}

\begin{table}
\centering
\begin{tabular}{ c|cc|cc|cc }
  $\mu$ & $\min E_\mu - \min E_0$ & order & $E_0(X_\mu) - \min E_0$ & order & $d(X_\mu, S_0)$ & order \\
  \hline
  $2^{-8}$ & 4.5203e-01 & -- & 2.6667e-03 & -- & 2.0772e-02 & -- \\
  $2^{-9}$ & 2.2668e-01 & 0.99576 & 6.9602e-04 & 1.9853 & 1.0505e-02 & 0.9835 \\
  $2^{-10}$ & 1.1351e-01 & 0.99783 & 1.9228e-04 & 1.9431 & 5.4162e-03 & 0.9556 \\
  $2^{-11}$ & 5.6799e-02 & 0.99891 & 5.3083e-05 & 1.8623 & 2.7386e-03 & 0.9838 \\
  $2^{-12}$ & 2.8410e-02 & 0.99946 & 1.3839e-05 & 1.9264 & 1.2868e-03 & 1.0896 \\
\end{tabular}
\caption{Small gap problem:  Convergence with respect to $\mu$.}
\label{tab:convwrtmuSmallGap}
\end{table}

\begin{figure}
\centering
\begin{subfigure}{.5\textwidth}
  \centering
  \includegraphics[scale=.6]{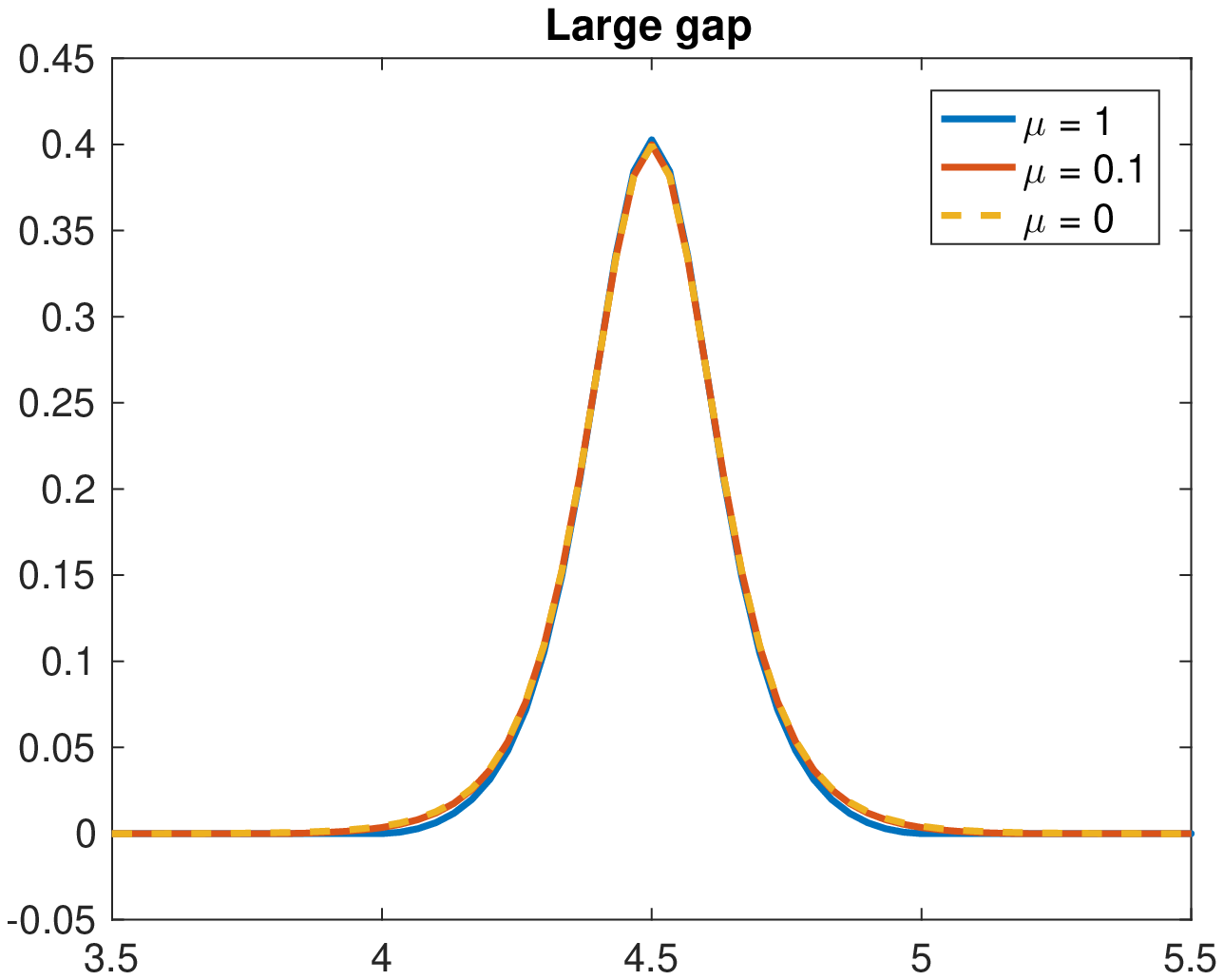}
\end{subfigure}%
\begin{subfigure}{.5\textwidth}
  \centering
  \includegraphics[scale=.6]{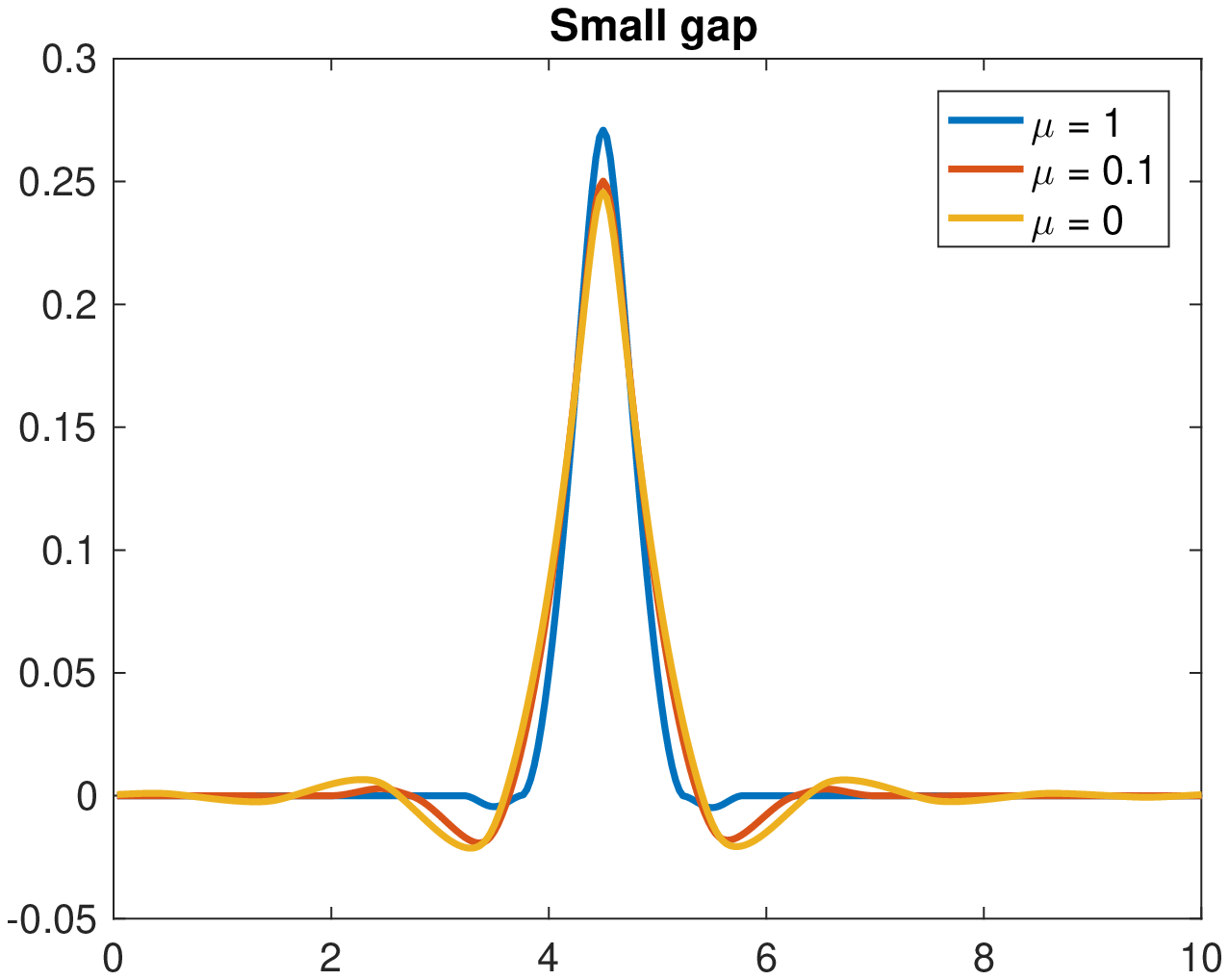}
\end{subfigure}
\caption{Here the 5th column of $X$ is plotted for different values of $\mu$.  The result from the large gap problem is zoomed in for a better view.}
\label{fig:sparsityWRTmuAndGap}
\end{figure}

\subsection{Comparing convergence of algorithms}
\label{subsec:compareConvOfAlgs}

Here we test the convergence rates of our proposed algorithms.  The
results in Figure \ref{fig:convRateTest} are typical when the initial
conditions have small support, \textit{i.e.}, are better
approximations of the solution, since the solution decays away from
the $r_i$'s.  The dependence on initial conditions will be discussed
further in Section \ref{subsec:numericsICs}.  We can see that the
dynamic backtracking algorithms converge much faster than the
traditional backtracking algorithms.  This increase in speed is due to
the fact that the dynamic algorithms can use much larger step sizes
than the traditional algorithms.  The convergence rate appears to be
linear, and there does not appear to be much of a difference between
the required number of iterations for the block and non--block
versions.

\begin{figure}
% This figure used N = 400 and mu = 0.1
\centerline{\includegraphics[scale=.6]{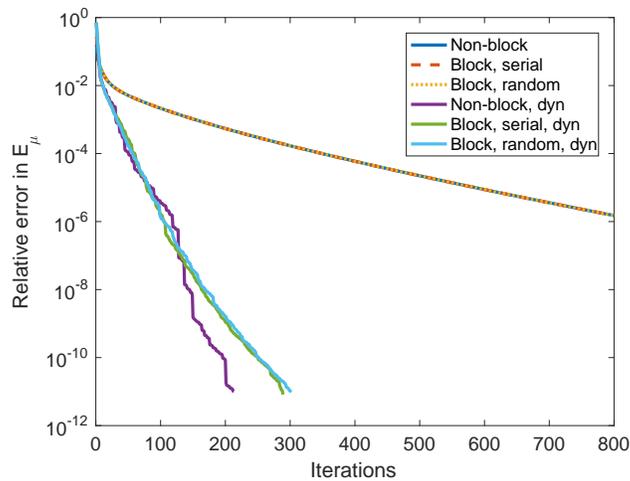}}
\caption{Convergence rate test}
\label{fig:convRateTest}
\end{figure}

\subsection{Local minima test}
\label{subsec:localMinTest}

Our method is not the first to try and take advantage of the ``almost compact'' structure of the orbitals.  A previous attempt was the truncated steepest descent (truncated SD) method.  However, it is known to have the unfortunate property of converging to local minima \cite{gao2009orbital}.  Since our functional $E_\mu$ is also known to contain local minima, we wish to test how often our proposed method gets trapped in local minima compared to the truncated SD method.  Our method has an additional parameter $\mu$ whose effect also needs to be tested.  It is intuitive that as $\mu$ gets larger, it creates larger ``basins'' around the local minima of $E_\mu$.  So we expect our algorithms to get stuck at local minima more often when $\mu$ is larger.

The following test compares many things:  truncated SD vs our proposed methods, block vs non--block, and the effect of using different values of $\mu$.  Our test setup is as follows.  For a given initial condition, we run the truncated SD method as well as the block and non--block dynamic backtracking methods with different values for $\mu$.  We use the large gap problem with $N = 500$ and $L = 60$.  All these methods have different global minima of their respective energy functionals, so to compare them all we plot (Figure \ref{fig:localMinTest}) how far above this minimum the respective algorithms converged to.  That is, for the $\mu = 10$ test, we plot $E_{10}(X) - \min E_{10}$.  We do likewise for the other values of $\mu$.  For the truncated SD method, we plot $E_0(X) - \min E_0$.

We can see from these results that even though our method is known to sometimes have local minima of $E_\mu$, it doesn't seem to be an issue in our test problem for small $\mu$.  Our method avoids local minima much better than truncated SD for small $\mu$, but gets stuck in local minima much more often for large $\mu$.  We also note that neither the block nor the non--block version of the algorithm seems to be much better or worse at avoiding local minima.  We also ran the same test with 10,000 trials for the non--block version with $\mu = 0.5$, and the algorithm did not get stuck at a local minima even once.

In our results, we notice that when the algorithm gets stuck at local minima, these local minima are essentially grouped into discrete energy levels.  This is especially seen well in the non--block version.  This behavior is expected because the local minima of $E_\mu$ are known to occur near critical points of $E_0$ (Theorem \ref{thm:localMinGenByCPs}) which are spans of eigenvectors of $H$ (Lemma \ref{lem:charOfCPs}).  Therefore, we expect that the energy value at local minima will be approximately the sum of some $m$ or fewer eigenvalues of $H$.  This is what leads to the observed discrete energy levels in the plots.

\begin{figure}[h]
\centering
\begin{subfigure}{.5\textwidth}
  \centering
  \includegraphics[scale=.4]{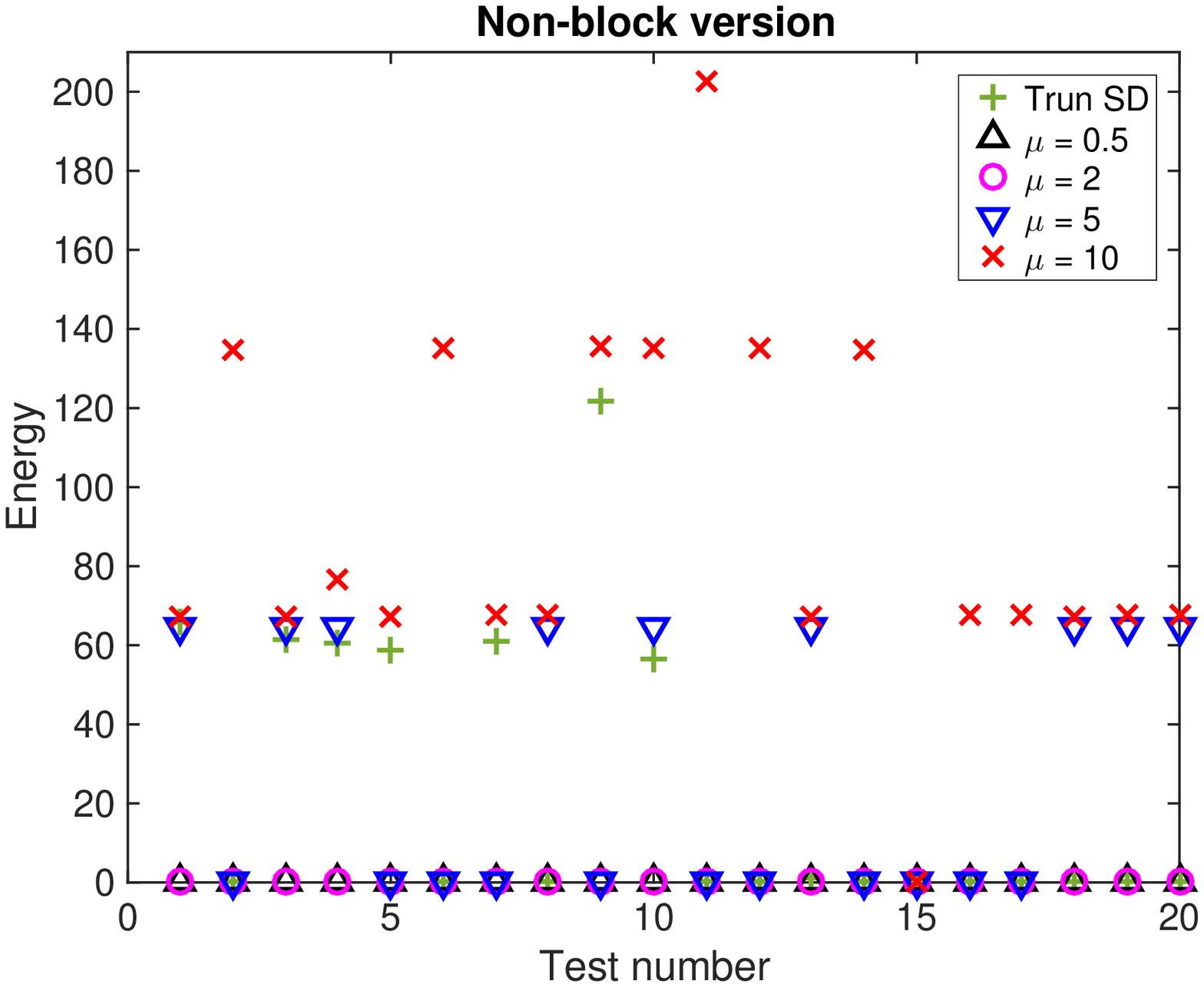}
\end{subfigure}%
\begin{subfigure}{.5\textwidth}
  \centering
  \includegraphics[scale=.4]{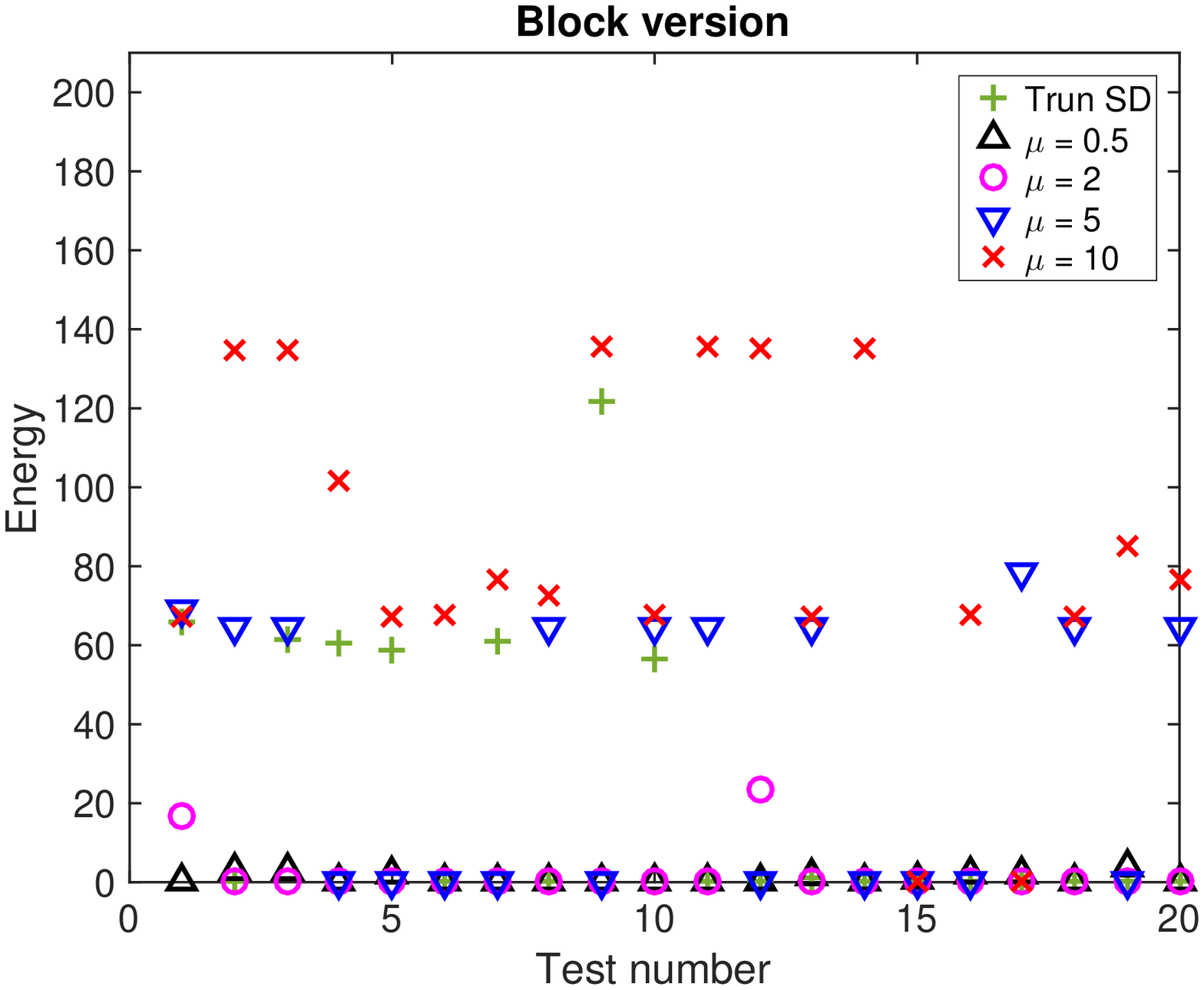}
\end{subfigure}
\caption{Plots of the energy (above the minimum) that the algorithms converged to.  Each column corresponds to a distinct initial condition and the initial conditions are the same in both plots.}
\label{fig:localMinTest}
\end{figure}

\subsection{Dependence on initial conditions}
\label{subsec:numericsICs}

In our next numerical test we investigate how the number of iterations required to converge depends on the initial condition.  In general, it would be expected that a ``good'' initial guess would lead to a smaller number of iterations required for convergence and a ``bad'' initial guess would lead to a larger number of iterations.  We will see that this is true, but there is also more to the story.  While we want to converge in as few iterations as possible, we also want to preserve sparsity from iteration to iteration.  By having a sparse $X$, we hope that the calculations can be done faster using sparse matrix multiplication algorithms.  We will see that this idea of preserving sparsity is actually intimately related to the number of iterations required to converge.

In order to get a better idea of what is going on, we note that sparsity from iteration to iteration is essentially determined in the following step of the algorithm.
\smallskip
\begin{algorithmic}
\State $X_k^{(j)} \leftarrow T_{\mu/L_k^{(j)}} \left(X_{k-1} - \frac{1}{L_k^{(j)}} \nabla E_0(X_{k-1}) \right)$
\end{algorithmic}
\smallskip
The part in the parentheses tends to increase the number of nonzero entries while the shrinkage operator tends to decrease them.  It is clear just from this formula that a larger $\mu$ will tend to preserve sparsity better than a smaller $\mu$.  However, there are two critical reasons for wishing to avoid using a large $\mu$.  The first is that the minima of $E_\mu$ will be farther away from the minima of $E_0$.  The second (as seen in Section \ref{subsec:localMinTest}) is that the basins around the local minima of $E_\mu$ will be larger which increases the chance of converging to a local minima of $E_\mu$.

We will use the dynamic backtracking version of ISTA for this test.  In Figure \ref{fig:sparsityPlots}, we plot the number of iterations that each entry of $X$ was nonzero.  In particular, our test problem uses the parameters $N = 150$, $m = 10$, and $\mu = 0.1$.  The only difference between the tests is the support of the initial conditions.  The support of the initial condition acts as a surrogate for how ``good'' the initial condition is.  With the way we are generating the initial conditions, a smaller support would tend to be closer in shape to the actual solution, and therefore a ``better'' initial guess.  We see from Figure \ref{fig:sparsityConv} that the smaller the value of $L$, the fewer iterations that are required for convergence.

For $L > 4$, we can see plateaus in the plots of Figure \ref{fig:sparsityConv}.  It is worthwhile to explore what is going on here since these plateaus dramatically increase the number of iterations required to converge.  We notice that for larger $L$, the algorithm actually converges to approximately the minimal value of $E_0$ very quickly, but then struggles to decrease its $\ell^1$ norm.  During the plateau region, the algorithm is slowly decreasing its $\ell^1$ norm while keeping its $E_0$ value fairly constant.  In other words, by the beginning of the plateau, the algorithm has found a point very near $S_0$, but this point has a large $\ell^1$ norm.  So, the plateau is essentially spent rotating (recall that $E_0$ is invariant under rotations) the solution into regions of lower $\ell^1$ norm that are also near $S_0$.

To be more concrete, in our tests we have found that these plateau regions are caused by overlap of the orbitals (the $\ell^1$ norm will be smaller when they are non--overlapping).  We can see this in Figure \ref{fig:sparsityPlots}.  For example, in the $L=12$ case, orbitals 6 and 7 overlapped for much of the simulation.  It is as if a Givens rotation had been applied to the 6th and 7th orbitals of an exact minimizer of $E_\mu$.  When two (or more) orbitals are ``competing'' with each other like this, convergence tends to be very slow until the competition is resolved (ie. one of them wins by essentially zeroing out the other one).  This is what causes those long plateaus in the graph of error vs iterations when $L = 8, 12, 16$.  So, it is important to avoid overlap because it leads to both less sparsity (more time per iteration) and slower convergence (more iterations). 

\begin{figure}[h]
\centerline{\includegraphics[scale=.9]{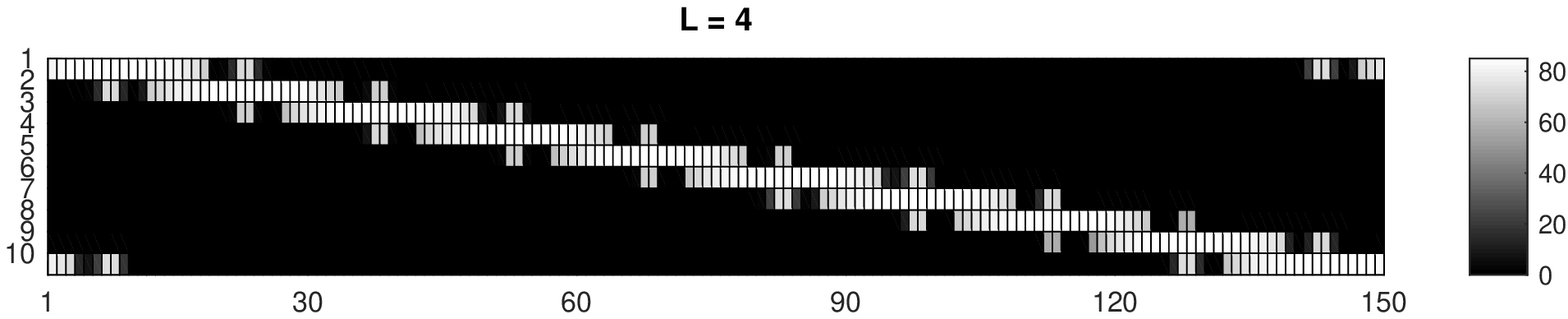}}

\centerline{\includegraphics[scale=.9]{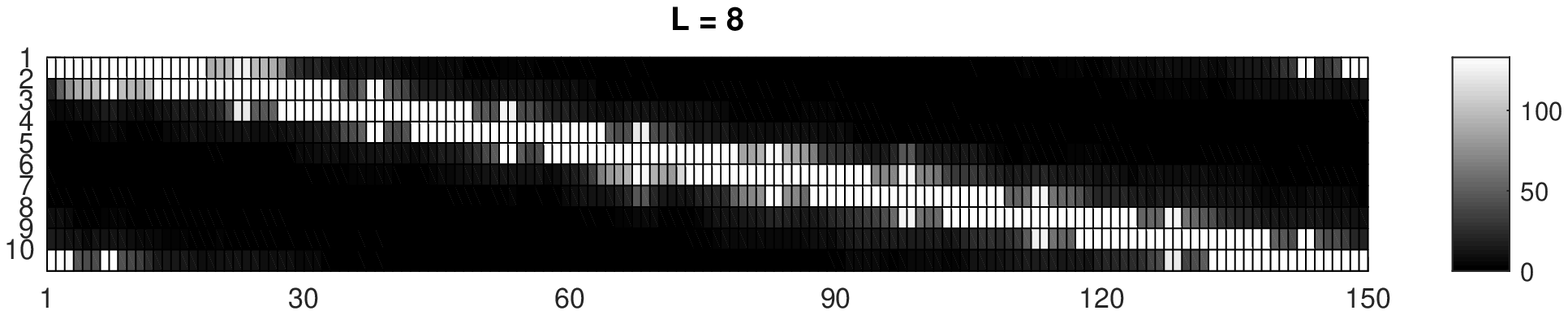}}

\centerline{\includegraphics[scale=.9]{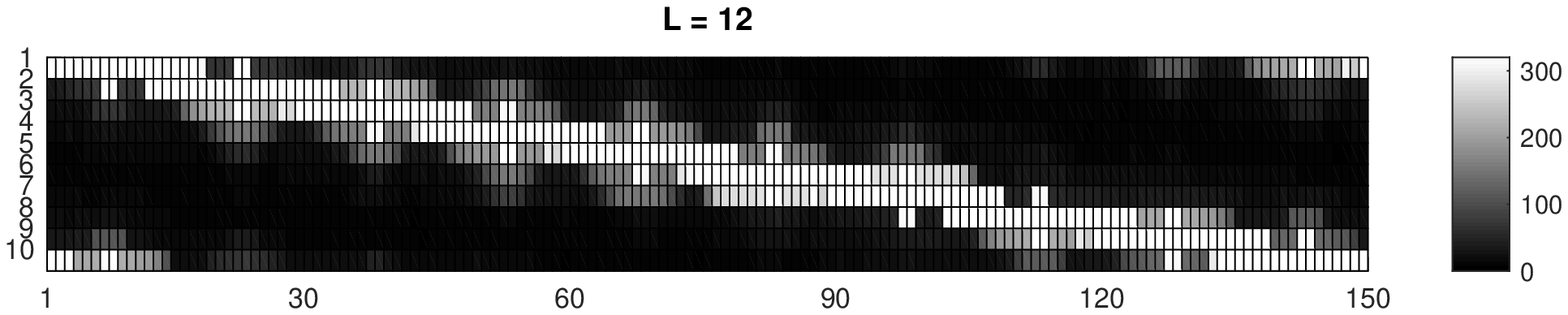}}

\centerline{\includegraphics[scale=.9]{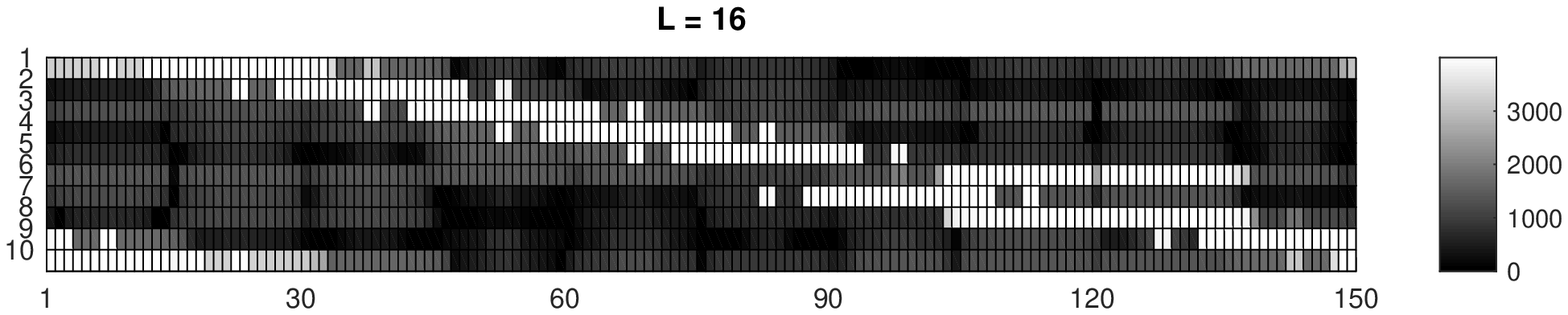}}

\caption{The above plots show the number of iterations for which each particular entry of $X$ was nonzero.  The layout of the plots corresponds to the transpose of $X$.  Note the different scales for each plot.}
\label{fig:sparsityPlots}
\end{figure}

\begin{figure}[h]
\centering
\begin{subfigure}{.5\textwidth}
  \centering
  \includegraphics[scale=.5]{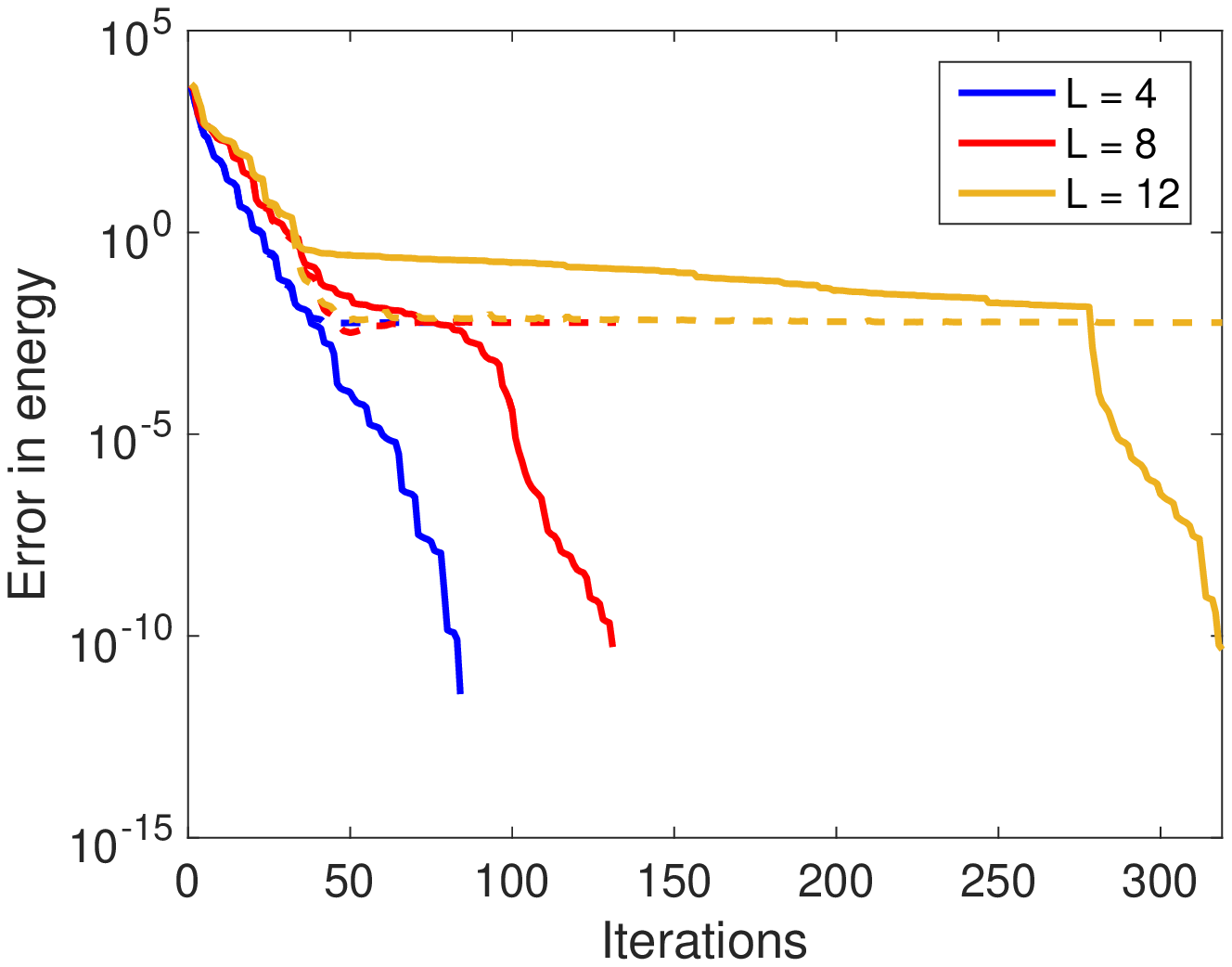}
  \caption{$L = 4,8,12$}
  \label{fig:sub1}
\end{subfigure}%
\begin{subfigure}{.5\textwidth}
  \centering
  \includegraphics[scale=.5]{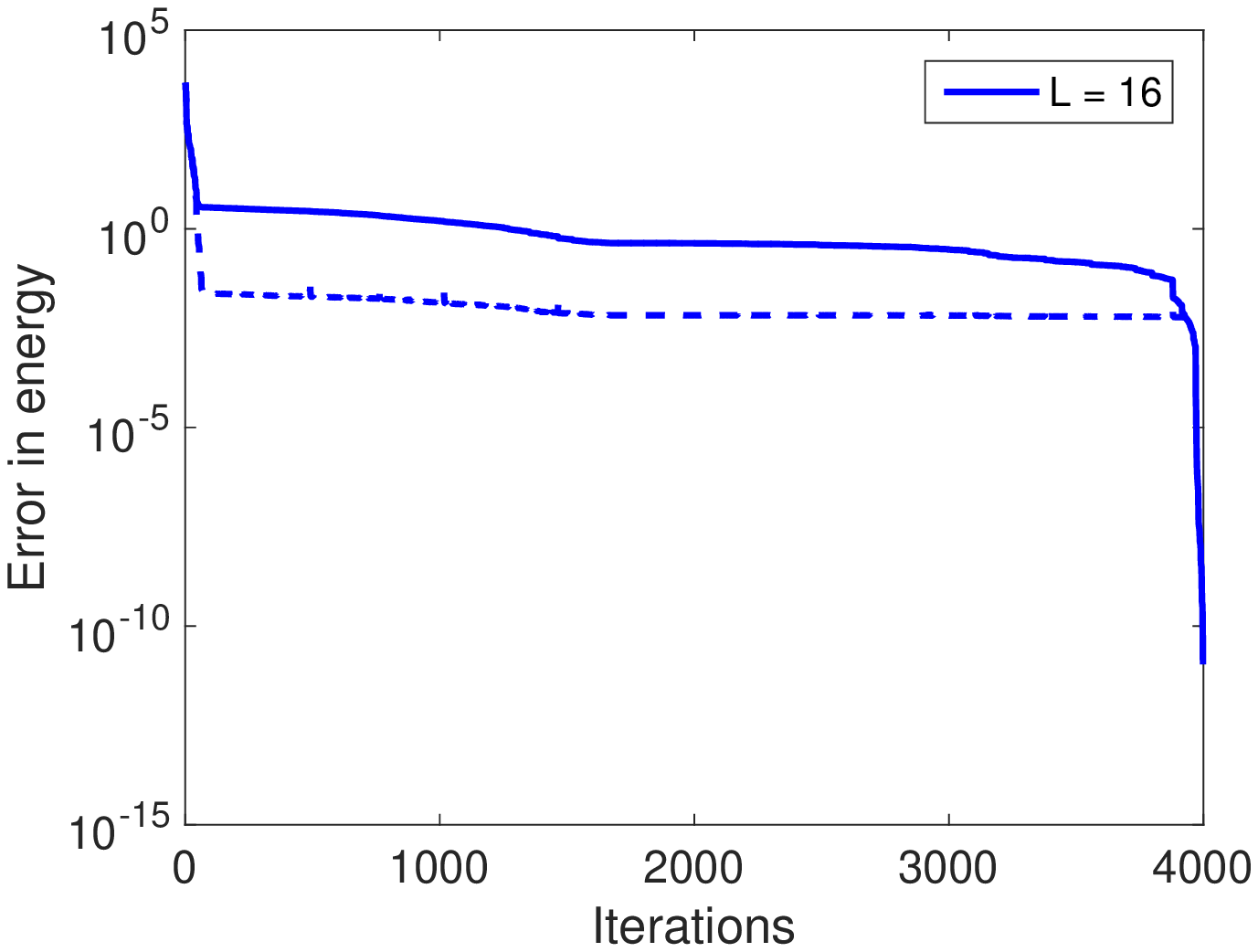}
  \caption{$L=16$}
  \label{fig:sub2}
\end{subfigure}
\caption{Convergence of the dynamic backtracking version of ISTA for different $L$.  The plots here correspond to the sparsity plots in Figure \ref{fig:sparsityPlots}.  Solid lines plot $E_\mu(X) - \min E_\mu$.  Dashed lines plot $E_0(X) - \min E_0$.  Note that we do not expect the error in $E_0$ to go to 0 since that would only happen in the case $\mu = 0$.}
\label{fig:sparsityConv}
\end{figure}

\subsection{Dynamic $\mu$}
\label{subsec:dynMu}

The above discussions on local minima, Theorem \ref{thm:convRateMu}, and maintaining sparsity lead us to some competing objectives for our choice of $\mu$.  On the one hand, we want a small $\mu$ so that the minima of $E_\mu$ are close to the minima of $E_0$ and so that we can avoid being trapped at local minima.  On the other hand, we want a large enough $\mu$ so that we can maintain sparsity which leads to faster convergence.  In this section, our goal is to show the possible benefit of changing $\mu$ from iteration to iteration.  We do not present a robust algorithm, but rather just a numerical test which provides hope that future work in this area could be beneficial.

In general, the main idea for choosing $\mu$ at each iteration should be based on the following points.  First, $\mu$ should be small enough at the beginning of the simulation so that the algorithm has a very low probability of getting stuck at a local minimum.  Second, after the algorithm has found a near minimal value of $E_0$, the value of $\mu$ could be increased in order to facilitate reduction of the $\ell^1$ norm.  A large $\mu$ provides a stronger ``force'' pushing $X$ towards a lower $\ell^1$ norm.  Note that using a larger $\mu$ will also push $X$ towards larger values of $E_0$.  However, as long as $\mu$ is not increased too much, we expect the value of $E_0$ to remain below $\min E_0 + \lambda_{m+1}-\lambda_m$.  Staying below this value would be preferred as we know from Theorem \ref{thm:localMinGenByCPs} and Lemma \ref{lem:charOfCPs} that for small $\mu$, local minima are near values of $E_0$ that correspond to the sum of some $m$ eigenvalues of $H$.  So, we would hope not to get stuck at a local minimum if we only moderately increase $\mu$ once we are already near a minimal value of $E_0$.  Finally, if we want our solution to be close to the exact solution, we want the value of $\mu$ to be small.  So, after the $\ell^1$ norm has been sufficiently decreased, $\mu$ could also be decreased to allow for a solution closer to an exact minimizer of $E_0$.

Of course, such a strategy is not so straightforward as one does not usually know a priori the values of $\min E_0$ and $\min E_\mu$.  So, we leave it as an open problem how one could implement such ideas in practice.  Nevertheless, we perform a numerical test which justifies the idea of increasing $\mu$ after several iterations.  To illustrate the effect that changing $\mu$ in this manner can have, we will carry out the above problem with $L = 16$ again.  We run the algorithm twice (starting with the same initial condition).  One run will use a constant $\mu$ value of $0.1$.  The other will also use a $\mu$ value of 0.1 except for iterations 100--499 during which $\mu$ will be set to 1.  We can see from Figure \ref{fig:changingMu} that the algorithm converges much faster when we change $\mu$ in this way ($\approx 550$ iterations instead of $\approx 2900$).  It is an open problem how this type of strategy could be carried out in a more dynamic and intelligent manner.

\begin{figure}[h]
% To generate these figures, put the line "rng(121654)" immediately before setting the initial condition in test_ICsparsity.m
\centering
\begin{subfigure}{.5\textwidth}
  \centering
  \includegraphics[scale=.5]{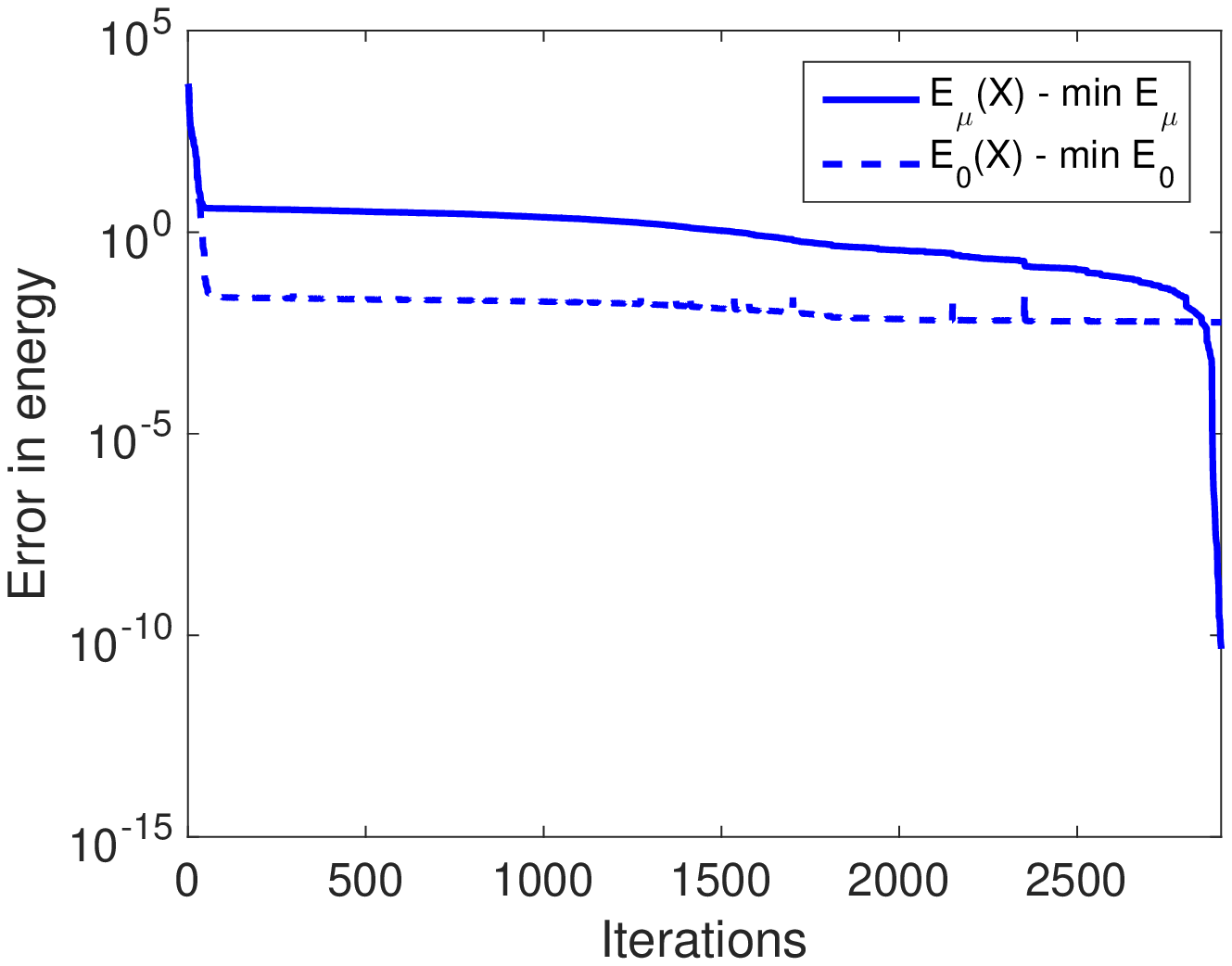}
  \caption{$L = 16$ with constant $\mu$}
  \label{fig:constMu}
\end{subfigure}%
\begin{subfigure}{.5\textwidth}
  \centering
  \includegraphics[scale=.5]{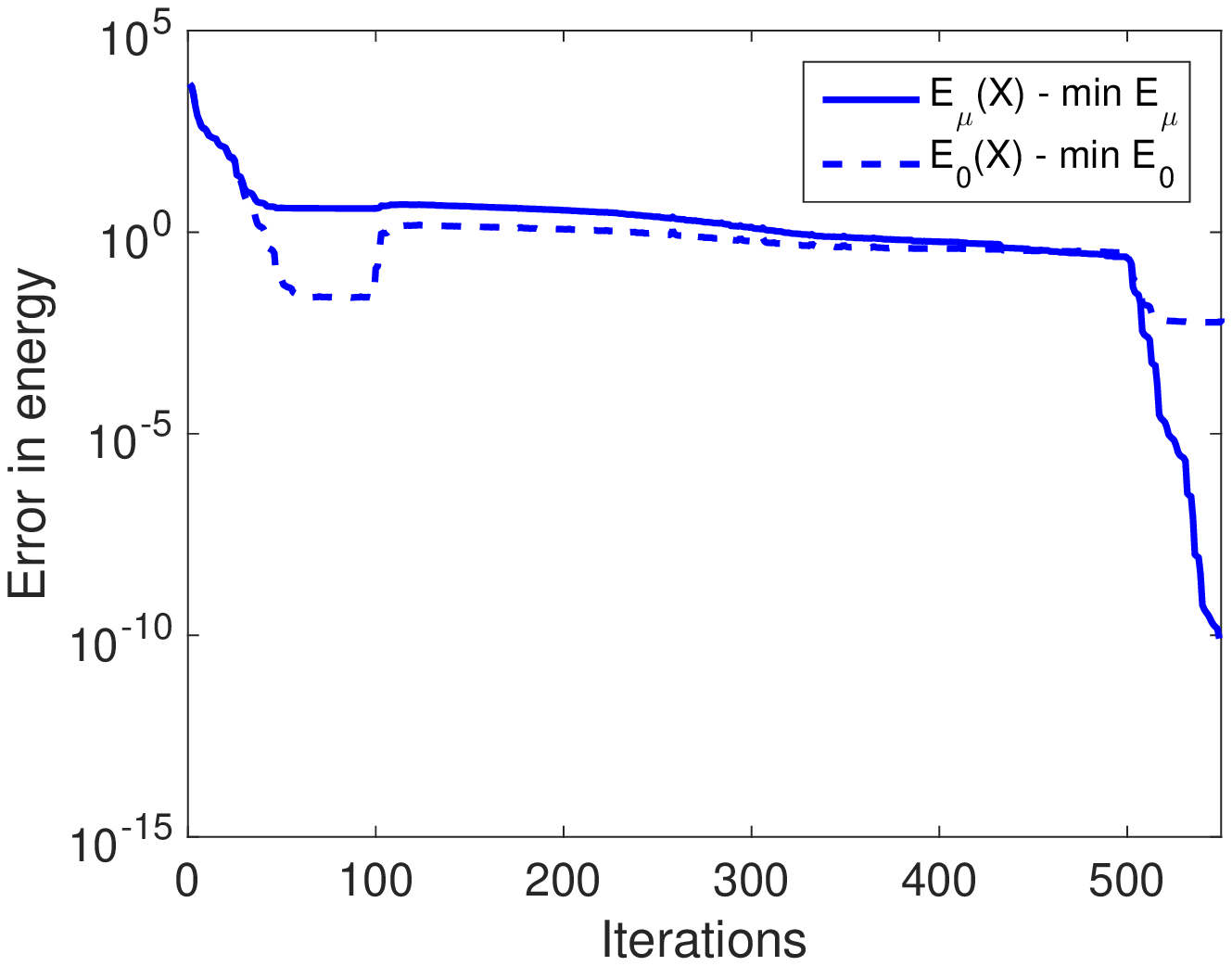}
  \caption{$L=16$ with variable $\mu$}
  \label{fig:varMu}
\end{subfigure}
\caption{Convergence with constant $\mu$ versus a variable $\mu$.  Note the different scales in the two plots.  Also note that for $\mu = 0.1$, $E_0(X_\mu) \approx \min E_0 + 5.9 \cdot 10^{-3}$.  In plot \ref{fig:varMu}, we see the value of $E_0(X)$ jump up when $\mu$ increases and jump down when $\mu$ decreases.  This is not surprising since a larger $\mu$ would take the solution farther away from the minimum value of $E_0$.}
\label{fig:changingMu}
\end{figure}

\section{Discussion}

We have presented a new method for OMM calculations which takes advantage of almost compact orbitals by introducing an $\ell^1$ penalty term into the energy functional.  Our analysis proves the convergence of the minimizers of $E_\mu$ to the minimizers of $E_0$ which justifies the practice of minimizing $E_\mu$ rather than $E_0$.  Numerical results show the ability of our algorithms to maintain sparsity from iteration to iteration.  This suggests that our algorithms combined with sparse matrix algorithms could have high performance.  Implementing such a scheme and comparing with other schemes which minimize the OMM energy functional is a direction for future work.

There are several additional possibilities for future work.  One is to
prove convergence results for our proposed algorithms.  Analysis has
been done on algorithms similar to the ones we present with
traditional backtracking \cite{xu2014globally, tseng2009coordinate}.
It will be interesting to investigate the convergence of the dynamic
backtracking algorithms.  Another possible future direction would be
to parallelize the block algorithm: all the blocks would be updated at
the same time (in parallel) rather than one after another.  The
convergence of such an algorithm needs further investigation.  A
further direction would be to explore the use of a dynamic $\mu$, as
already discussed in Section \ref{subsec:dynMu}.  Last but not
least, the convergence of the algorithm might be accelerated by using
a well chosen preconditioner, for example as considered in the recent work \cite{LuYang}.

\bibliographystyle{abbrv} 
\bibliography{references}

\end{document}